\newcommand{\nc}{\newcommand}
\newcommand{\delete}[1]{}
\nc{\mlabel}[1]{\label{#1}}  
\nc{\mcite}[1]{\cite{#1}}  
\nc{\mref}[1]{\ref{#1}}  
\nc{\mbibitem}[1]{\bibitem{#1}} 
\nc{\mlabel}[1]{\label{#1}  
{\hfill \hspace{1cm}{\small\tt{{\ }\hfill(#1)}}}}
\nc{\mcite}[1]{\cite{#1}{\small{\tt{{\ }(#1)}}}}  
\nc{\mref}[1]{\ref{#1}{{\tt{{\ }(#1)}}}}  
\nc{\mbibitem}[1]{\bibitem[\bf #1]{#1}} 
\newtheorem{theorem}{Theorem}[section]
\newtheorem{prop}[theorem]{Proposition}
\newtheorem{lemma}[theorem]{Lemma}
\newtheorem{coro}[theorem]{Corollary}
\theoremstyle{definition}
\newtheorem{defn}[theorem]{Definition}
\newtheorem{remark}[theorem]{Remark}
\newtheorem{exam}[theorem]{Example}
\newtheorem{prop-def}{Proposition-Definition}[section]
\newcommand\cal[1]{\mathcal{#1}}
\newcommand\alphlist{a,b,c,d,e,f,g,h,i,j,k,l,m,n,o,p,q,r,s,t,u,v,w,x,y,z}
\newcommand\Alphlist{A,B,C,D,E,F,G,H,I,J,K,L,M,N,O,P,Q,R,S,T,U,V,W,X,Y,Z}
\newcommand\getcmds[3]{\expandafter\newcommand\csname #2#1\endcsname{#3{#1}}}
\alphlist\do{\expandafter\getcmds\expandafter{\x}{frak}{\mathfrak}}
\Alphlist\do{\expandafter\getcmds\expandafter{\x}{frak}{\mathfrak}}
\nc{\bfk}{{\bf k}}
\font\cyr=wncyr10
\newfont{\scyr}{wncyr10 scaled 550}
\nc{\sha}{\mbox{\cyr X}}
\nc{\ssha}{\mbox{\bf \scyr X}}
\nc{\id}{\mathrm{id}}
\nc{\Id}{\mathrm{Id}}
\nc{\lbar}[1]{\overline{#1}}
\nc{\ot}{\otimes}
\nc{\dep}{\mathrm{dep}}
\nc{\ver}{\mathrm{ver}}
\nc{\tred}[1]{\textcolor{red}{#1}} \nc{\tgreen}[1]{\textcolor{green}{#1}}
\nc{\tblue}[1]{\textcolor{blue}{#1}} \nc{\tpurple}[1]{\textcolor{purple}{#1}}
\nc{\tcyan}[1]{\textcolor{cyan}{#1}} 
\nc{\tblk}[1]{\textcolor{black}{#1}}
\nc{\li}[1]{\tpurple{\underline{Li:}#1 }}
\nc{\liadd}[1]{\tpurple{#1}}
\nc{\xing}[1]{\tblue{\underline{Xing:}#1 }}
\nc{\yuan}[1]{\tred{\underline{Yuan:}#1 }}
\nc{\markus}[1]{\tred{\underline{Markus:} #1}}
\nc{\dominique}[1]{\tpurple{\underline{Dominique: }#1 }}
\long\def\ignore#1{}
\tikzset{
baseon/.style={baseline={($(#1)+(0,-0.58ex)$)}},
baseon/.default=current bounding box.center,
every picture/.style=baseon,
lst/.style={},
dst/.style={fill,circle,inner sep=1.5pt,outer sep=0pt},
ddst/.style={diamond,draw,inner sep=1pt},
eest/.style={ellipse,draw,inner sep=1pt,minimum size=2ex},
}
\newcommand\treeo[2][]{\tikz[x=0.7cm,y=0.7cm,line width=0.15ex,
every node/.style={font=\scriptsize,inner sep=1pt,label distance=1pt},#1]{%
\coordinate (o) at (0,0);#2}}%
\newcommand\treeoo[2][]{\treeo[#1]{\path (o) node[dst,name=o]{};#2}}%
\def\zzz#1`#2...#3`#4...#5`#6@{%
--++(#1)
node[dst,label={#5:$#6$},name=#2]{}
node[midway,auto,#3]{$#4$}
}
\def\ddd#1`#2`#3@{+(#1)node[ddst,name=#2]{$#3$}}
\def\eee#1`#2`#3@{+(#1)node[eest,name=#2]{$#3$}}
\def\xxx#1`#2@{node[midway,auto,inner sep=1pt,#1]{$#2$}}
\def\pp#1`#2`#3@{node[dst,label={#2:$#3$},pos=#1]{}}
\def\oo#1`#2`#3@{\path (o) node[dst,label={#2:$#3$},name=o,#1]{};}
\def\eoo#1`#2@{\node[eest,name=o,#1] at (o) {$#2$};}
\nc{\dnx}{\Delta_n A} \nc{\dx}{\Delta A} \nc{\dgp}{{\rm deg_{P}}}
\nc{\dgt}{{\rm deg_{T}}} \nc{\dg}{{\rm deg}} \nc{\ida}{ID($A$)} \nc{\tu}{\tilde{u}} \nc{\tv}{\tilde{v}}
\nc{\nr}{\calr_n} \nc{\nz}{\calz_n} \nc{\fun}{\cala_{n,d}}
 \nc{\fbase}{\calb} \nc{\LF}{\mathrm{RF}} \nc{\FFA}{\mathrm{LF}} \nc{\irr}{\mathrm{Irr}}
 \nc{\result}{\bfk\mathrm{Irr}(S_n)}  \nc{\I}{I_{\mathrm{ID},n}^0}
 \nc{\nrs}{\calr_n^\star} \nc{\ii}{\mathrm{I}} \nc{\iii}{\mathrm{II}}
\nc{\intl}{{\rm int}}\nc{\ws}[1]{{#1}}\nc{\deleted}[1]{\delete{#1}}\nc{\plas}{placements\xspace}
\nc{\bim}[1]{#1}  \nc{\shaop}{\sha_{\Omega}^{+}}  \nc{\shao}{\sha_{\Omega}}
\nc{\bbim}[2]{#1 #2} \nc{\bbbim}[2]{#1,\, #2} \nc{\RBF}{{\rm RBF}}
\nc{\frb}{F_{\RB}} \nc{\shaf}{\ssha_{\tiny{\Omega}}} \nc{\sham}{\diamond_{\tiny{\Omega}}}
\nc{\lf}{\lfloor} \nc{\rf}{\rfloor} \nc{\shan}{\ssha_{\lambda}}
\nc{\rlex}{{\rm {lex}}} \nc{\bb}{\Box} \nc{\ra}{\rightarrow}
\nc{\e}{{\rm {e}}}
\nc{\DDF}{\mathrm{DD}(X,\,\Omega)}\nc{\DTF}{\mathrm{DT}(X,\,\Omega)} \nc{\DT}{\mathrm{DT}'(\Omega,\,V)}
\nc{\bra}{\mathrm{bra}} \nc{\bre}{\mathrm{bre}}
\nc{\dec}{\mathrm{dec}} \nc{\diamondw}{\diamond_{w}}
\nc{\type}{\mathrm{type}}
\nc\caF[1]{\cal{F}_{#1}(X,\,\Omega)}
\nc\calt{\cal{T}(X,\,\Omega)} \nc\caltn{\cal{T}_n(X,\,\Omega)}
\nc\calta{\cal{T}_0(X,\,\Omega)}
\nc\caltb{\cal{T}_1(X,\,\Omega)}
\nc\caltc{\cal{T}_2(X,\,\Omega)}
\nc\caltd{\cal{T}_3(X,\,\Omega)}
\nc\caltm{\cal{T}_m(X,\,\Omega)}
\nc\caltx{\cal{T}(X)}
\nc\calf{\cal{F}(X,\,\Omega)}
\nc\fram{\frak{M}(\Omega,\, X)}
\nc\shaw{\sha^{NC}_w(\Omega,\, X)}
\nc\dw{\diamond_w} \nc\dl{\diamond_\ell}
\nc\shal{\sha^{NC}_\ell(X,\, \Omega)} \nc\shav{\sha^{NC}_w(\Omega,\, V)} \nc\shat{\sha^{NC,1}_w(\Omega,\, T^{+}(V))}
\nc{\cfo}{\cal{F}(X,\,\Omega)}
\nc{\lar}{\varinjlim}
\nc\XO{(X,\,\Omega)}
\def\cxo#1#2;{\cal{#1}#2\XO}
\nc\lrf[2]{B_{#2}^+(#1)}
\nc{\fd}{\mathrm{\text{typed angularly decorated planar rooted trees}}}
\nc{\rb}{\mathrm{RBFWs}} \nc{\dfw}{\mathrm{DFW{(X)}}} \nc{\tfw}{\mathrm{TFW{(X)}}}
\nc{\tfv}{\mathrm{TFW{(V)}}}
\def\Ve#1,#2,#3;{\vee_{#1,\,(#2,\,#3)}}
\def\bigv#1;#2;#3;{\bigvee\nolimits_{#1}^{#2;\,#3}}
\nc\rjt[2]{\mathrel{\mathop{\longrightarrow}\limits^{#1\hfill}_{\hfill#2}}}
\nc{\pl}{\cal{PLF}}
\nc{\tr}{\cal{RTF}}
\nc{\im}{\mathrm{Im}}
\nc{\ff}{\cal{F}_\Omega}
\nc{\tm}{T_\Omega}
\nc{\calp}{\cal{P}}
\nc\dd{\@ifnextchar'{\ddA}{\ddB}}
\def\ddA'#1;{\rhd'_{#1\,}}
\def\ddB#1;{\rhd_{#1\,}}
\begin{document}

\title[Free pre-Lie family algebras]{Free pre-Lie family algebras}
%

\author{Dominique Manchon}
\address{Laboratoire de Math\'ematiques Blaise Pascal,
CNRS--Universit\'e Clermont-Auvergne,
3 place Vasar\'ely, CS 60026,
F63178 Aubi\`ere, France}
\email{Dominique.Manchon@uca.fr}

\author{Yuanyuan Zhang} \address{School of Mathematics and Statistics,
Lanzhou University, Lanzhou, 730000, P. R. China}
\email{zhangyy17@lzu.edu.cn}

%

\date{\today}

\begin{abstract}
In this paper, we first define the pre-Lie family algebra associated to a dendriform family algebra in the case of a commutative semigroup. Then we construct a pre-Lie family algebra via typed decorated rooted trees, and we prove the freeness of this pre-Lie family algebra. We also construct pre-Lie family operad in terms of typed labeled rooted trees, and we obtain that the operad of pre-Lie family algebras is isomorphic to the operad of typed labeled rooted trees, which generalizes the result of F. Chapoton and M. Livernet. In the end, we construct Zinbiel and pre-Poisson family algebras and generalize results of M. Aguiar.
\end{abstract}

\subjclass[2010]{
16W99, 
16S10, 
08B20, 
}

\keywords{}

\maketitle

\tableofcontents

\setcounter{section}{0}

\allowdisplaybreaks
\section{Introduction}

Rota-Baxter family algebras were proposed by L. Guo in 2009 \cite{Guo09}, as a generalization of Rota-Baxter algebras. This was the first example of ``family algebraic structure", where an algebraic structure interacts with a semigroup in a way that remains to be understood in full generality. In this paper, we propose a notion of pre-Lie family algebra compatible with the notion of dendriform family algebra introduced in \cite{ZG}, describe the corresponding free objects and operad in terms of decorated (resp. labeled) typed rooted trees. Interestingly enough, the semigroup at play must be commutative in this case. In the last section, we propose a family counterpart of Zinbiel algebras and Aguiar's pre-Poisson algebras \cite{A}.\\

It is well known that
 the first direct link between Rota-Baxter algebras and dendriform algebras was given by Aguiar~\cite{A} who showed that a Rota-Baxter algebra of weight $0$ carries a dendriform algebra structure.
In ~\cite{ZG}, the authors gave the definition of dendriform family algebras and also studied the similar property between Rota-Baxter family algebras and dendriform family algebras. Free Rota-Baxter family algebras were studied in~\cite{ZG,ZGM2} and free dendriform family algebras were studied in ~\cite{ZGM}. L. Foissy recently proposed a unified framework encompassing both dendriform family algebras indexed by a semigroup and matching dendriform algebras indexed by a set \cite{ZGG20}, by means of the new concept of extended diassociative semigroup \cite{Foi20}.\\

Rooted trees are a useful tool for several areas of Mathematics, such as in the study of vector fields~\cite{Cay}, numerical analysis~\cite{Br} and quantum field theory~\cite{CK}.
The work of the British mathematician Arthur Cayley in the 1850s can now be considered as the prehistory of pre-Lie algebras. F. Chapoton and M. Livernet~\cite{CL} first showed that the free pre-Lie algebra generated by a set $X$ is given by grafting of $X$-decorated rooted trees (see also \cite{DL} for an approach which does not use operads). Pre-Lie structures on rooted trees lead to the Butcher-Connes-Kreimer Hopf algebra of rooted forests, which attracted many scholars to study, such as~\cite{CK, GL, Hof, Mur}. The basic setting for Butcher series is provided by this combinatorial Hopf algebra, which has become an indispensable tool in the analysis of numerical integration~\cite{HLW}. Lie-Butcher series come with a Hopf algebra of planar rooted forests closely related to post-Lie algebras~\cite{MW}.\\

In Section \ref{sec:rp}, we define the pre-Lie family algebra associated to a dendriform family algebra in the case of a commutative semigroup. L. Foissy~\cite{Foi18} considered typed decorated trees to describe free multiple pre-Lie algebras. The decoration is a map from the set of vertices into a set $X$, whereas the type is a map from the set of edges into another set $\Omega$. In his approach, no semigroup structure is required on the set $\Omega$. In Section \ref{sec:pre}, we show that  the free pre-Lie family algebra is also given by decorated rooted trees, typed by some commutative semigroup $\Omega$, whose multiplication plays an essential role.\\

Operads were first named and rigorously defined by J. Peter May in his 1972 book~\cite{May72}, which investigated the applications of operads to loop spaces and homotopy analysis.
There has been a renewed interest in this theory after the breakthrough brought along by the introduction of the Koszul duality for operads by V. Ginzburg and M. Kapranov~\cite{GK94}. That renaissance \cite{L96} gave rise to research in many areas, from algebraic topology to theoretical physics~\cite{MSS}, which have continued to yield important results to our days.
F. Chapoton and M. Livernet defined the underlying operad of pre-Lie algebras in terms of rooted trees, which sheds light on the relationship between them. In Section \ref{sec:oper}, we prove that the operad of pre-Lie family algebras is isomorphic to the operad of typed decorated rooted trees, which
generalizes the result of F. Chapoton and M. Livernet~\cite{CL}.\\

Finally, we give a brief account of some other family algebraic structures which may be of some interest: we introduce Zinbiel family and pre-Poisson family algebras in Section~\mref{sec:pp}, and describe the link between them, thus generalizing some results of M. Aguiar \cite{A} (Proposition~\mref{prop:poi} and Proposition~\mref{prop:zp}).\\

\noindent {\bf Notation.}
 Throughout this paper, let $\bfk$ be a unitary commutative ring which will be the base ring of all modules, algebras, as well as linear maps.
 Algebras are unitary associative algebras but not necessary commutative. 
 \section{From dendriform family algebras to pre-Lie family algebras}
 \mlabel{sec:rp}
 \subsection{Reminder on Rota-Baxter and dendriform family algebras}
The first family algebra structures which appeared in the literature are Rota-Baxter family algebras proposed by Li Guo in 2009 \cite{Guo09}. They arise naturally in renormalization of quantum field theory~\cite[Proposition~9.1]{FBP}, see also~\cite{DK}.
\begin{defn}~\rm{(\mcite{FBP,Guo09})}
Let $\Omega$ be a semigroup and $\lambda\in \bfk$ be given.
A {\bf Rota-Baxter family} of weight $\lambda$ on an algebra $R$ is a collection of linear operators $(P_\omega)_{\omega\in\Omega}$ on $R$ such that
\begin{equation*}
P_{\alpha}(a)P_{\beta}(b)=P_{\alpha\beta}\bigl( P_{\alpha}(a)b  + a P_{\beta}(b) + \lambda ab \bigr),\, \text{ for }\, a, b \in R\,\text{ and }\, \alpha,\, \beta \in \Omega.
\end{equation*}
The pair $(R,\, (P_\omega)_{\omega \in \Omega})$ is called a {\bf Rota-Baxter family algebra} of weight $\lambda$.
\mlabel{def:pp}
\end{defn}
A well-known example of Rota-Baxter family algebra of weight $-1$, with $\Omega=(\mathbb Z,+)$, is given by the algebra of Laurent series $R=\bfk[z^{-1},z]]$, where the operator $P_\omega$ is the projection onto the subspace $R_{<\omega}$ generated by $\{z^k,\,k<\omega\}$ parallel to the supplementary subspace $R_{\ge \omega}$ generated by $\{z^k,\,k\ge\omega\}$.\\

The concept of dendriform family algebras was proposed in~\mcite{ZG}, as a generalization of dendriform algebras invented by Loday~\mcite{Lod93} in the study of algebraic $K$-theory.

\begin{defn}\mlabel{defn:dend}\cite{ZG}
Let $\Omega$ be a semigroup. A {\bf dendriform family algebra} is a $\bfk$-module $D$ with a family of binary operations $(\prec_\omega,\succ_\omega)_{\omega\in\Omega}$ such that for $ x, y, z\in D$ and $\alpha,\beta\in \Omega$,
\begin{align}
(x\prec_{\alpha} y) \prec_{\beta} z=\ & x \prec_{\bim{\alpha \beta}} (y\prec_{\beta} z+y \succ_{\alpha} z), \mlabel{eq:ddf1} \\
(x\succ_{\alpha} y)\prec_{\beta} z=\ & x\succ_{\alpha} (y\prec_{\beta} z),\mlabel{eq:ddf2} \\
(x\prec_{\beta} y+x\succ_{\alpha} y) \succ_{\bim{\alpha \beta}}z  =\ & x\succ_{\alpha}(y \succ_{\beta} z). \mlabel{eq:ddf3}
\end{align}
\end{defn}
Any Rota-Baxter family algebra has an underlying dendriform family algebra structure, and even two distinct ones if the weight $\lambda$ is different from zero \mcite{ZG,ZGM}.
\subsection{Pre-Lie family algebras}
 Pre-Lie algebras were introduced in 1963 independently by Gerstenhaber and Vinberg~\mcite{Ger,Vin}, see~\cite{CL} for more references and examples, including an explicit description of the free pre-Lie algebra on a vector space. Now we propose the concept of left pre-Lie family algebras.

 \begin{defn}
 Let $\Omega$ be a commutative semigroup.
 A left pre-Lie family algebra is a vector space $A$ together with binary operations
 $\rhd_\omega:A\times A\ra A$ for $\omega\in\Omega$ such that
 \begin{equation}
 x\rhd_\alpha(y\rhd_\beta z)-(x\rhd_\alpha y)\rhd_{\alpha\beta}z=y\rhd_\beta(x\rhd_\alpha z)-(y\rhd_\beta x)\rhd_{\beta\alpha}z,
 \mlabel{eq:pre}
 \end{equation}
 where $x,y,z\in A$ and $\alpha,\beta\in\Omega.$
 \end{defn}

 \begin{theorem}
Let $\Omega$ be a commutative semigroup and let $(A,(\prec_\omega,\succ_\omega)_{\omega\in\Omega})$ be a dendriform family algebra. Define binary operations
$$x\rhd_\omega y:=x\succ_\omega y-y\prec_\omega x,\text{ for }\, \omega\in\Omega.$$
Then $(A, (\rhd_\omega)_{\omega\in\Omega})$ is a pre-Lie family algebra.
\mlabel{thm:dp}
\end{theorem}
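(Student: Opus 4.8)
The plan is a direct verification of the defining identity \eqref{eq:pre}. Writing $\mathrm{P}(x,\alpha;y,\beta;z):=x\rhd_\alpha(y\rhd_\beta z)-(x\rhd_\alpha y)\rhd_{\alpha\beta}z$ for the pre-Lie associator, the relation to be proved is exactly the symmetry $\mathrm{P}(x,\alpha;y,\beta;z)=\mathrm{P}(y,\beta;x,\alpha;z)$ under exchange of the two pairs $(x,\alpha)$ and $(y,\beta)$. So I would expand $\mathrm{P}(x,\alpha;y,\beta;z)$ entirely in terms of $\prec$ and $\succ$, simplify it using the dendriform family axioms, and then observe that the resulting expression is invariant under the exchange.

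First I would substitute $u\rhd_\omega v=u\succ_\omega v-v\prec_\omega u$ throughout. Each of the two summands of $\mathrm{P}$ unfolds into four monomials, so $\mathrm{P}(x,\alpha;y,\beta;z)$ becomes an alternating sum of eight terms, for instance $x\succ_\alpha(y\succ_\beta z)$, $(x\succ_\alpha y)\succ_{\alpha\beta}z$, $z\prec_{\alpha\beta}(y\prec_\alpha x)$, and so on. The next step is to reduce these eight terms using the three dendriform axioms: the combination $x\succ_\alpha(y\succ_\beta z)-(x\succ_\alpha y)\succ_{\alpha\beta}z$ collapses to $(x\prec_\beta y)\succ_{\alpha\beta}z$ by \eqref{eq:ddf3}; the two ``mixed'' monomials are rewritten by \eqref{eq:ddf2}, e.g. $x\succ_\alpha(z\prec_\beta y)=(x\succ_\alpha z)\prec_\beta y$; and the purely left-leaning monomial $(z\prec_\beta y)\prec_\alpha x$ is expanded by \eqref{eq:ddf1}, which is where commutativity of $\Omega$ first enters, since it produces the index $\beta\alpha$ that must be identified with $\alpha\beta$ in order to cancel against $z\prec_{\alpha\beta}(y\prec_\alpha x)$.

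After this reduction $\mathrm{P}(x,\alpha;y,\beta;z)$ simplifies to six terms, which I would organize into three groups: a ``$\succ$'' group $(x\prec_\beta y)\succ_{\alpha\beta}z+(y\prec_\alpha x)\succ_{\alpha\beta}z$, a ``$\prec$'' group $z\prec_{\alpha\beta}(x\succ_\alpha y)+z\prec_{\alpha\beta}(y\succ_\beta x)$, and a remaining pair $-(x\succ_\alpha z)\prec_\beta y-y\succ_\beta(z\prec_\alpha x)$. The first two groups are manifestly stable under the exchange $(x,\alpha)\leftrightarrow(y,\beta)$ once $\alpha\beta=\beta\alpha$ is invoked. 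For the last pair, the exchange sends it to $-(y\succ_\beta z)\prec_\alpha x-x\succ_\alpha(z\prec_\beta y)$, and a single application of \eqref{eq:ddf2} to each monomial identifies this with the original pair. Hence $\mathrm{P}$ is symmetric under the exchange, which is precisely \eqref{eq:pre}.

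The computation is otherwise routine; the main obstacle is purely bookkeeping, namely orienting each of the eight monomials so that the correct dendriform axiom applies and tracking the semigroup indices carefully through each rewriting. It is worth emphasizing that commutativity of $\Omega$ is used in an essential way, both to cancel the $\beta\alpha$-indexed term arising from \eqref{eq:ddf1} and to match the indices in the two symmetric groups; this is the structural reason why the hypothesis of commutativity cannot be dropped.
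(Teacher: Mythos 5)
Your proposal is correct and takes essentially the same route as the paper's proof: expand the pre-Lie associator into eight dendriform monomials, collapse $x\succ_\alpha(y\succ_\beta z)-(x\succ_\alpha y)\succ_{\alpha\beta}z$ via the third axiom, expand $(z\prec_\beta y)\prec_\alpha x$ via the first axiom (where commutativity of $\Omega$ is needed to cancel the $\beta\alpha$-indexed term), and reduce to six surviving terms. The only difference is organizational: you compute one side and check invariance of the six-term expression under the exchange $(x,\alpha)\leftrightarrow(y,\beta)$, whereas the paper computes both sides to six terms each and matches them by an explicit permutation $\sigma$ --- the same computation, packaged slightly more economically in your version.
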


\begin{proof}
On the left hand side, we have
\begin{align*}
&x\rhd_\alpha(y\rhd_\beta z)-(x\rhd_\alpha y)\rhd_{\alpha\beta}z\\
={}&x\rhd_\alpha(y\succ_\beta z-z\prec_\beta y)-(x\succ_\alpha y-y\prec_\alpha x)\rhd_{\alpha\beta}z\\
={}&x\succ_\alpha(y\succ_\beta z-z\prec_\beta y)-(y\succ_\beta z-z\prec_\beta y)\prec_\alpha x\\
&-\Big((x\succ_\alpha y-y\prec_\alpha x)\succ_{\alpha\beta}z-z\prec_{\alpha\beta}(x\succ_\alpha y-y\prec_\alpha x)\Big)\\
={}&x\succ_\alpha(y\succ_\beta z)-x\succ_\alpha(z\prec_\beta y)-(y\succ_\beta z)\prec_\alpha x+(z\prec_\beta y)\prec_\alpha x\\
& -(x\succ_\alpha y)\succ_{\alpha\beta}z+(y\prec_\alpha x)\succ_{\alpha\beta}z+z\prec_{\alpha\beta}(x\succ_\alpha y)-z\prec_{\alpha\beta}(y\prec_\alpha x)\\
={}&(x\prec_\beta y)\succ_{\alpha\beta}z+(x\succ_\alpha y)\succ_{\alpha\beta}z-x\succ_\alpha(z\prec_\beta y)-(y\succ_\beta z)\prec_\alpha x\\
& +z\prec_{\beta\alpha}(y\prec_\alpha x)+z\prec_{\beta\alpha}(y\succ_\beta x)-(x\succ_\alpha y)\succ_{\alpha\beta}z\\
& +(y\prec_\alpha x)\succ_{\alpha\beta}z+z\prec_{\alpha\beta}(x\succ_\alpha y)-z\prec_{\alpha\beta}(y\prec_\alpha x)\\
&\hspace{3cm}\text{(by Eqs.~(\mref{eq:ddf1}-\mref{eq:ddf3}))}\\
={}&(x\prec_\beta y)\succ_{\alpha\beta}z-x\succ_\alpha(z\prec_\beta y)-(y\succ_\beta z)\prec_\alpha x\\
& +z\prec_{\beta\alpha}(y\succ_\beta x)
 +(y\prec_\alpha x)\succ_{\alpha\beta}z+z\prec_{\alpha\beta}(x\succ_\alpha y).
\end{align*}
On the right hand side, we have
\begin{align*}
&y\rhd_\beta(x\rhd_\alpha z)-(y\rhd_\beta x)\rhd_{\beta\alpha}z\\
={}&y\rhd_\beta(x\succ_\alpha z-z\prec_\alpha x)-(y\succ_\beta x-x\prec_\beta y)\rhd_{\beta\alpha}z\\
={}&y\succ_\beta(x\succ_\alpha z-z\prec_\alpha x)-(x\succ_\alpha z-z\prec_\alpha x)\prec_\beta y\\
& -\Big((y\succ_\beta x-x\prec_\beta y)\succ_{\beta\alpha}z-z\prec_{\beta\alpha}(y\succ_\beta x-x\prec_\beta y)\Big)\\
={}&y\succ_\beta(x\succ_\alpha z)-y\succ_\beta(z\prec_\alpha x)-(x\succ_\alpha z)\prec_\beta y+(z\prec_\alpha x)\prec_\beta y\\
& -(y\succ_\beta x)\succ_{\beta\alpha}z+(x\prec_\beta y)\succ_{\beta\alpha}z+z\prec_{\beta\alpha}(y\succ_\beta x)-z\prec_{\beta\alpha}(x\prec_\beta y)\\
={}&(y\succ_\beta x)\succ_{\beta\alpha}z+(y\prec_\alpha x)\succ_{\beta\alpha}z-y\succ_\beta(z\prec_\alpha x)-(x\succ_\alpha z)\prec_\beta y\\
& +z\prec_{\alpha\beta}(x\prec_\beta y)+z\prec_{\alpha\beta}(x\succ_\alpha y)-(y\succ_\beta x)\succ_{\beta\alpha}z\\
&\hspace{3cm}\text{(by Eqs.~(\mref{eq:ddf1}-\mref{eq:ddf3}))}\\
& +(x\prec_\beta y)\succ_{\beta\alpha}z+z\prec_{\beta\alpha}(y\succ_\beta x)-z\prec_{\beta\alpha}(x\prec_\beta y)\\
={}&(y\prec_\alpha x)\succ_{\beta\alpha}z-y\succ_\beta(z\prec_\alpha x)-(x\succ_\alpha z)\prec_\beta y\\
& +z\prec_{\alpha\beta}(x\succ_\alpha y)
+(x\prec_\beta y)\succ_{\beta\alpha}z+z\prec_{\beta\alpha}(y\succ_\beta x).
\end{align*}
Now we see, using the commutativity of the semigroup $\Omega$, that the $i$-th term in the expansion of the left hand side equals to the $\sigma(i)$-th term in the
 expansion of the right hand side, where $\sigma$ is the following permutation of order $6$:
 \begin{equation*}
\begin{pmatrix}
     i \\
     \sigma(i)
\end{pmatrix}
=
\begin{pmatrix}
1 & 2 & 3 & 4 & 5 & 6 \\
5 & 3 & 2 & 6 & 1 & 4
\end{pmatrix}.
\end{equation*}
Thus
$$x\rhd_\alpha(y\rhd_\beta z)-(x\rhd_\alpha y)\rhd_{\alpha\beta}z=y\rhd_\beta(x\rhd_\alpha z)-(y\rhd_\beta x)\rhd_{\beta\alpha}z$$
holds. This completes the proof.
\end{proof}

\section{Free pre-Lie family algebras}
\mlabel{sec:pre}
\subsection{The construction of pre-Lie family algebras}\mlabel{sub:dd}
In this subsection, we apply typed decorated rooted trees
to construct free pre-Lie family algebras.
For this, let us first recall typed decorated rooted trees studied in~\cite{BHZ,Foi18}.\\

\noindent For a rooted tree $T$, denote by $V(T)$ (resp. $E(T)$) the set of its vertices (resp. edges).

\begin{defn}\cite{BHZ}
Let $X$ and $\Omega$ be two sets. An {\bf $X$-decorated $\Omega$-typed (abbreviated typed decorated) rooted tree}
is a triple $T = (T,\dec, \type)$, where
\begin{enumerate}
\item $T$ is a rooted tree.
\item $\dec: V(T)\ra X$ is a map.
\item $\type: E(T)\ra \Omega$ is a map.
\end{enumerate}
\mlabel{defn:tdtree}
\end{defn}
In other words, vertices of $T$ are decorated by elements of $X$ and edges of $T$ are decorated by elements of $\Omega$.\\

\begin{remark}
Let $X$ and $\Omega$ be two sets. The following trees are the same.
\[
\treeo{
\oo`-90`a@
\draw (o)
\zzz1,1`r...swap`\alpha_5...[inner sep=0pt]-45`g@
\zzz0.5,1`rr...swap`\alpha_6...0`f@
;
\draw (o)
\zzz-1,1`l...`\alpha_4...[inner sep=0pt]-135`b@
\zzz-0.5,1`ll...`\alpha_2...0`d@
\zzz-0.3,1`lll...`\alpha_1...180`e@
;
\draw (l)
\zzz0.5,1`lr...swap`\alpha_3...0`c@
;
}
=
\treeo{
\oo`-90`a@
\draw (o)
\zzz-1,1`l...`\alpha_5...[inner sep=0pt]-135`g@
\zzz-0.5,1`ll...`\alpha_6...180`f@
;
\draw (o)
\zzz1,1`r...swap`\alpha_4...[inner sep=0pt]-45`b@
\zzz0.5,1`rr...swap`\alpha_2...180`d@
\zzz0.3,1`rrr...swap`\alpha_1...0`e@
;
\draw (r)
\zzz-0.5,1`rl...`\alpha_3...180`c@
;
}
=
\treeo{
\oo`-90`a@
\draw (o)
\zzz1,1`r...swap`\alpha_4...[inner sep=0pt]-45`b@
\zzz0.5,1`rr...swap`\alpha_3...0`c@
;
\draw (o)
\zzz-1,1`l...`\alpha_5...[inner sep=0pt]-135`g@
\zzz-0.5,1`ll...`\alpha_6...180`f@
;
\draw (r)
\zzz-0.5,1`rl...`\alpha_2...0`d@
\zzz-0.3,1`rll...`\alpha_1...0`e@
;
}
\]
\end{remark}

For $n\geq 0$, let $\caltn$ denote the set of typed decorated rooted trees with $n$ vertices.
Denote by
$$\calt:= \bigsqcup_{n\geq 0}\caltn\,\text{ and }\, \bfk\calt :=\bigoplus_{n\geq 0}\bfk\caltn.$$
The degree $|T|$ of a typed decorated rooted tree is by definition its number of vertices.
\begin{defn}
Let $X$ be a set and let $\Omega$ be a commutative semigroup. For $S,T\in\calt$ and $\omega\in\Omega$, define
\begin{equation*}
S\rhd_\omega T:=\sum_{v\in \ver(T)} S\rjt{\omega}{v}T.
\end{equation*}
The notation $S\rjt{\omega}{v}T$ means that
\begin{itemize}
\item the tree $S$ is grafted on $T$ at vertex $v$ by means of adding a new edge typed by $\omega$ between the root of $S$ and $v$,
\item each edge below the vertex $v$ has its type multiplied by $\omega$,
\item the other edges keep their types unchanged.
\end{itemize}
\mlabel{defn:pre}
\end{defn}

\begin{exam}
Let $\Omega$ be a commutative semigroup. Let
$$S=\treeo{
\oo`-90`f@
\draw (o)
\zzz-1,1`l...`\beta_1...90`g@
;
\draw (o)
\zzz1,1`r...swap`\beta_2...[inner sep=0pt]-45`j@
\zzz0.5,1`rr...swap`\beta_4...90`i@
;
\draw (r)
\zzz-0.5,1`rl...`\beta_3...90`h@
;
}\,\text{ and }\,
T=\treeo{
\oo`-90`a@
\draw (o)
\zzz-1,1`l...pos=0.3`\alpha_1...[inner sep=0pt]-135`\tred{v}@
\zzz-0.5,1`ll...`\alpha_2...90`c@
;
\draw (o)
\zzz0,1`a...`...90`d@
;
\draw (o)
\zzz1,1`r...swap`\alpha_3...90`e@
;
}.$$
Then we have
$$S\rjt{\omega}{v}T=\treeo{
\oo`-90`f@
\draw (o)
\zzz-1,1`l...`\beta_1...90`g@
;
\draw (o)
\zzz1,1`r...swap`\beta_2...[inner sep=0pt]-45`j@
\zzz0.5,1`rr...swap`\beta_4...90`i@
;
\draw (r)
\zzz-0.5,1`rl...`\beta_3...90`h@
;
}\rjt{\omega}{v} \treeo{
\oo`-90`a@
\draw (o)
\zzz-1,1`l...pos=0.3`\alpha_1...[inner sep=0pt]-135`\tred{v}@
\zzz-0.5,1`ll...`\alpha_2...90`c@
;
\draw (o)
\zzz0,1`a...`...90`d@
;
\draw (o)
\zzz1,1`r...swap`\alpha_3...90`e@
;
}=\treeo{
\oo`-90`a@
\draw (o)
\zzz-1,1`l...pos=0.3`\tred{\omega}\alpha_1...[inner sep=0pt]-135`\tred{v}@
\zzz-1,1`ll...`\alpha_2...90`c@
;
\draw (o)
\zzz0,1`a...`\alpha...0`d@
;
\draw (o)
\zzz1,1`r...swap`\alpha_3...90`e@
;
\path (l)++(1,1)
node[dst,label={[inner sep=0pt]-45:$f$},name=o2]{}
;
\draw[red, dashed] (l)--(o2)
node[midway,auto]{$\omega$}
;
\draw (o2)
\zzz-1,1`o2l...`\beta_1...90`g@
;
\draw (o2)
\zzz1,1`o2r...swap`\beta_2...[inner sep=0pt]-45`j@
\zzz0.5,1`o2rr...swap`\beta_4...90`i@
;
\draw (o2r)
\zzz-0.5,1`o2rl...`\beta_3...90`h@
;
}.$$
\end{exam}

\begin{lemma}
Let $\Omega$ be a commutative semigroup. For $v,v'\in U$ and $\alpha,\beta\in \Omega$, we have
\begin{equation*}
S\rjt{\alpha}{v}(T\rjt{\beta}{v'}U)=T\rjt{\beta}{v'}(S\rjt{\alpha}{v}U).
\mlabel{eq:one}
\end{equation*}
\mlabel{lemma:one}
\end{lemma}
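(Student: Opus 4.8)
The plan is to show that both sides of the identity produce the \emph{same} typed decorated rooted tree, by comparing separately their underlying rooted tree, their vertex decorations, and their edge types. For a vertex $w$ of $U$, write $P(w)$ for the set of edges lying on the unique path from $w$ to the root of $U$; by Definition~\mref{defn:pre} this is exactly the set of edges ``below $w$''. Grafting $S$ at $v$ multiplies the type of every edge in $P(v)$ by $\alpha$ and adds one new edge from $\mathrm{root}(S)$ to $v$ typed $\alpha$; grafting $T$ at $v'$ does the analogous thing with $P(v')$, $\beta$, and $\mathrm{root}(T)$.

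First I would dispose of the underlying decorated tree. Each side merely attaches the subtree $S$ to $v$ and the subtree $T$ to $v'$, each by a single new edge running upward from the grafting vertex. Since our rooted trees are non-planar (as recorded in the Remark following Definition~\mref{defn:tdtree}), attaching two subtrees at two vertices is independent of the order in which it is carried out, and grafting never alters the decoration map $\dec$. Moreover, grafting a subtree at one vertex does not change the ancestor relation among the vertices of $U$, so the paths $P(v)$ and $P(v')$, regarded as sets of edge positions of $U$, are the very same on both sides of the identity. The two newly created edges lie strictly above $v$ and $v'$ respectively, hence on no path to the root; consequently neither new edge is affected by the other grafting, and on either side they carry types $\alpha$ and $\beta$ respectively.

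It then remains to compare the types of the edges of $U$, which I would do by classifying each edge $e$ of $U$ according to its membership in $P(v)$ and $P(v')$. Every edge outside $P(v)\cup P(v')$ is left untouched on both sides. An edge lying in exactly one of the two paths is multiplied by the single corresponding factor ($\alpha$ or $\beta$), identically on both sides. The one genuinely order-sensitive case---and the main point of the argument---is an edge $e\in P(v)\cap P(v')$: on the left-hand side its type is multiplied first by $\beta$ (when $T$ is grafted at $v'$) and then by $\alpha$ (when $S$ is grafted at $v$), yielding the factor $\alpha\beta$, whereas on the right-hand side it is multiplied first by $\alpha$ and then by $\beta$, yielding $\beta\alpha$.

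The main obstacle is precisely this last case, and it is resolved by the hypothesis that $\Omega$ is commutative, which gives $\alpha\beta=\beta\alpha$; every other comparison is order-independent for purely combinatorial reasons. Since the underlying decorated trees, the two new edges, and all edge types of $U$ agree, the two typed decorated rooted trees coincide, which proves the identity. Note that this argument also covers the degenerate case $v=v'$, where $P(v)=P(v')$ and every edge below $v$ receives the common factor $\alpha\beta=\beta\alpha$.
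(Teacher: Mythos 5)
Your proof is correct and follows essentially the same route as the paper's: both arguments observe that the two sides share the same underlying decorated tree and then classify the edges of $U$ according to whether they lie below $v$, below $v'$, both, or neither, with the commutativity $\alpha\beta=\beta\alpha$ resolving the only order-sensitive case. Your explicit treatment of the two new edges and of the degenerate case $v=v'$ is a welcome extra precision, but it does not change the substance of the argument.
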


\begin{proof}
For $\alpha,\beta\in\Omega$ and $v,v'\in U,$ we have
\begin{align*}
S\rjt{\alpha}{v}(T\rjt{\beta}{v'}U)&=\treeo{
\eoo`U@
\draw (o)
\eee-1,1`l`T@
(o)--(l)\xxx`\beta@
\pp0`90`v'@
;
\draw (o)
\eee1,1`r`S@
(o)--(r)\xxx swap`\alpha@
\pp0`90`v@
;
}=\treeo{
\eoo`U@
\draw (o)
\eee-1,1`l`S@
(o)--(l)\xxx`\alpha@
\pp0`90`v@
;
\draw (o)
\eee1,1`r`T@
(o)--(r)\xxx swap`\beta@
\pp0`90`v'@
;
}\\
&=T\rjt{\beta}{v'}(S\rjt{\alpha}{v}U).
\end{align*}
 Both terms are obtained by grafting $S$ on $U$ at vertex $v$ and grafting $T$ on $U$ at vertex $v'$. They are equivalent because the result does not depend on the order in which both operations are performed. Indeed,
\begin{itemize}
\item the new edge which is below $S$ is of type $\alpha$ and the new edge which is below $T$ is of type $\beta;$
\item edges which are below both vertices $v$ and $v'$ have their types multiplied  by $\alpha\beta (=\beta\alpha);$
\item edges which are below $v$ and not below $v'$ have their types multiplied  by $\alpha;$
\item edges which are below $v'$ and not below $v$ have their types multiplied  by $\beta;$
\item the other edges keep their types unchanged.
\end{itemize}
\end{proof}

\noindent For better understanding Lemma~\mref{lemma:one}, we give the following example.
\begin{exam}
Let $\Omega$ be a commutative semigroup. Let $U=\treeo{
\oo`-90`f@
\draw (o)
\zzz-1,1`l...`\beta_1...90`g@
;
\draw (o)
\zzz1,1`r...swap`\beta_2...[inner sep=0pt]-45`\tred{v'}@
\zzz0.5,1`rr...swap`\beta_4...0`\tred{v}@
;
\draw (r)
\zzz-0.5,1`rl...`\beta_3...90`h@
;
\draw (rr)
\zzz-0.5,1`rrl...swap`\beta_5...90`a@
;
}$. Then we have
$$S\rjt{\alpha}{v}(T\rjt{\beta}{v'}U)=
\treeo{
\oo`-90`f@
\draw (o)
\zzz-1,1`l...`\beta_1...90`g@
;
\draw (o)
\zzz1,1`r...swap`\tred{\beta\alpha}\beta_2 (=\tred{\alpha\beta}\beta_2)...[inner sep=0pt]-45`\tred{v'}@
\zzz1,1`rr...swap`\tred{\alpha}\beta_4...[inner sep=0pt]-45`\tred{v}@
\zzz-0.5,1`rrl...`\beta_5...90`a@
;
\draw (r)
\zzz-1,1`rl...`\beta_3...90`h@
;
\draw[red] (rr)
\eee0.5,1`rrr`S@
;
\draw[red,dashed](rr)--(rrr)
\xxx swap`\alpha@
;
\draw[red](r)
\eee-0.5,2`ra`T@
;
\draw[red,dashed](r)--(ra)
\xxx pos=0.8`\beta@
;
}=T\rjt{\beta}{v'}(S\rjt{\alpha}{v}U).$$
\end{exam}

\begin{lemma}\mlabel{lemma:two}
Let $\Omega$ be a commutative semigroup. For $v\in T,v'\in U$ and $\alpha,\beta\in \Omega$, we have
\begin{equation}
S\rjt{\alpha}{v}(T\rjt{\beta}{v'}U)=(S\rjt{\alpha}{v}T)\rjt{\alpha\beta}{v'}U.
\mlabel{eq:two}
\end{equation}
\end{lemma}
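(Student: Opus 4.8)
The plan is to prove \eqref{eq:two} as an equality of typed decorated rooted trees by showing that the two sides produce the same underlying decorated tree and then that every edge carries the same type on each side. First I would observe that, as far as the underlying rooted tree and its vertex-decoration $\dec$ are concerned, both members describe the same object: one grafts the root of $T$ onto $U$ at the vertex $v'$ and grafts the root of $S$ onto the vertex $v$ of (the embedded copy of) $T$, and the resulting shape together with the decorations is manifestly independent of the order in which these two graftings are carried out. Hence the entire content of the lemma is the assertion that the type map $\type$ agrees on both sides, and the proof reduces to a bookkeeping of edge types.

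To carry out this bookkeeping I would partition the edges of the common resulting tree into the edges internal to $S$, the edges internal to $T$ (split according to whether they lie below $v$ or not), the edges internal to $U$ (split according to whether they lie below $v'$ or not), and the two newly created edges: the edge $e_S$ joining the root of $S$ to $v$ and the edge $e_T$ joining the root of $T$ to $v'$. The one point that requires care, and which I expect to be the only real obstacle, is the structure of the left-hand side. After forming $W:=T\rjt{\beta}{v'}U$, the vertex $v$, being a vertex of $T$, lies above $v'$ in $W$; consequently, when $S$ is grafted at $v$ in the second step, the edges below $v$ that get multiplied by $\alpha$ are not merely the edges below $v$ inside $T$, but also the new edge $e_T$ and all the edges of $U$ lying below $v'$. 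Recognizing this propagation of the multiplication through $e_T$ into $U$ is the crux; once it is made explicit, each edge class is settled by a one-line computation.

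Finally I would run through the classes. The edges internal to $S$ are never rescaled and agree trivially. The edges of $T$ not below $v$ are left untouched on both sides, while those below $v$ are multiplied by $\alpha$ on both sides (in the second grafting on the left, in the first grafting on the right). The edges of $U$ not below $v'$ are untouched on both sides. For an edge of $U$ below $v'$ of original type $\gamma$, the left-hand side yields $\alpha(\beta\gamma)$, first the factor $\beta$ from grafting $T$ at $v'$, then the factor $\alpha$ because the edge lies below $v$, whereas the right-hand side yields $(\alpha\beta)\gamma$ from the single grafting at $v'$ with label $\alpha\beta$; these coincide by associativity in $\Omega$ (note that only associativity is needed, commutativity playing no role in this particular identity). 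The edge $e_S$ has type $\alpha$ on both sides, being created with that type and, on the right, unaffected by the last grafting since it lies in $S\rjt{\alpha}{v}T$ rather than in $U$ below $v'$. Lastly $e_T$ has type $\alpha\beta$ on both sides: on the left it is created with type $\beta$ and then rescaled by $\alpha$ since it lies below $v$, and on the right it is created directly with type $\alpha\beta$. As all edges match, the two typed decorated trees are equal, which proves \eqref{eq:two}.
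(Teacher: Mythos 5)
Your proposal is correct and follows essentially the same route as the paper's proof: both sides are seen to share the same underlying decorated tree, and the identity is reduced to an edge-by-edge comparison of types, with the crux being that on the left-hand side the second grafting's multiplication by $\alpha$ propagates through the new edge joining $T$ to $U$ and down into the edges of $U$ below $v'$, yielding $\alpha(\beta\gamma)$ against $(\alpha\beta)\gamma$. Your side remark that only associativity of $\Omega$ (and not commutativity) is needed for this particular lemma is accurate and a nice refinement, though the paper does not make it explicit.
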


\begin{proof}
For $\alpha,\beta\in\Omega$ and $v\in T,v'\in U,$ the equality of both terms can be summarized as follows:
\begin{align*}
S\rjt{\alpha}{v}(T\rjt{\beta}{v'}U)&=\treeo{
\eoo`U@
\draw (o)
\eee0,1.25`a`T@
(o)--(a)\xxx`\alpha\beta@
\pp0`[inner sep=-1pt]45`v'@
;
\draw (a)
\eee0,1.25`aa`S@
(a)--(aa)\xxx`\alpha@
\pp0`[inner sep=-1pt]45`v@
;
}
=(S\rjt{\alpha}{v}T)\rjt{\alpha\beta}{v'}U.
\end{align*}
Indeed, both terms have the same underlying decorated rooted tree. To precise the type of each edge of the left hand side of Eq.~(\mref{eq:two}), we can proceed in two steps:
\begin{itemize}
\item the first step consists in grafting $T$ on $U$ at vertex $v'$, and the new edge is of type $\beta$. The edges which are below the vertex $v'$ of $U$ have their types multiplied by  $\beta;$
\item the second step consists in grafting $S$ on the result $T\rjt{\beta}{v'}U$ at vertex $v$ of $T$, the new edge is of type $\alpha$, and the edges (including the new edge produced by the first step) which are below the vertex $v$ have their types multiplied by  $\alpha.$ So combining the first step and the second step, the new edge between $T$ and $U$ should have its type multiplied by  $\alpha\beta$, and the edges which are below the vertex $v'$ of $U$ have their types multiplied by $\alpha\beta$ totally.
\end{itemize}
Similarly for the right-hand side:
\begin{itemize}
\item the first step consists in grafting $S$ on $T$ at vertex $v$, and the new edge is of type $\alpha$. The edges which are below the vertex $v$ of $T$ have their types multiplied by  $\alpha;$
\item the second step consists in grafting $S\rjt{\alpha}{v}T$ on the result at vertex $v'$ of $U$, the new edge is of type $\alpha\beta$, and the edges which are below the vertex $v'$ of $U$ have their types multiplied by  $\alpha\beta.$
\end{itemize}
So both sides of Eq.~(\mref{eq:two}) coincide.
\end{proof}

\begin{exam}
Let $\Omega$ be a commutative semigroup. Let
$$U=\treeo{
\oo`-90`a_1@
\draw (o)
\zzz-1,1`l...`\omega_2...180`a_2@
\zzz0.5,1`lr...`\omega_1...90`a_5@
;
\draw (o)
\zzz1,1`r...swap`\omega_3...[inner sep=0pt]-45`a_3@
\zzz0.5,1`rr...swap`\omega_5...0`\tcyan{v'}@
\zzz-0.5,1`rrl...swap`\omega_6...90`a_6@
;
\draw (r)
\zzz-0.5,1`rl...`\omega_4...90`a_4@
;
}\,\text{ and }\, T=\treeo{
\oo`-90`b_1@
\draw (o)
\zzz-1,1`l...`\delta_2...180`\tred{v}@
\zzz0.5,1`lr...`\delta_3...90`b_4@
;
\draw (o)
\zzz1,1`r...swap`\delta_1...0`b_3@
;
}.$$
 Then
\begin{align*}
S\rjt{\alpha}{v}(T\rjt{\beta}{v'}U)&=
\treeo{
\oo`-90`a_1@
\draw (o)
\zzz-1,1`l...`\omega_2...180`a_2@
\zzz0.5,1`lr...`\omega_1...90`a_5@
;
\draw (o)
\zzz1,1`r...swap`\tred\alpha\tcyan\beta\omega_3...[inner sep=0pt]-45`a_3@
\zzz1,1`rr...swap`\tred\alpha\tcyan\beta\omega_5...[inner sep=0pt]-45`\tcyan{v'}@
\zzz-0.5,1`rrl...`\omega_6...90`a_6@
;
\draw (r)
\zzz-0.5,1`rl...`\omega_4...90`a_4@
;
\draw[dashed,cyan] (rr)
\zzz1,1`oo,black...swap`\tred\alpha\beta...[inner sep=0pt]-45`\tblk{b_1}@
;
\draw (oo)
\zzz-0.5,1`ool...`\tred\alpha\delta_2...0`\tred{v}@
\zzz0.5,1`oolr...swap,pos=0.8`\delta_3...90`b_4@
;
\draw (oo)
\zzz1,1`oor...swap`\delta_1...0`b_3@
;
\draw (ool)
\eee-0.5,1`ooll`S@
;
\draw[red,dashed](ool)--(ooll)
\xxx`\alpha@
;
}
=\treeo{
\oo`-90`a_1@
\draw (o)
\zzz-1,1`l...`\omega_2...180`a_2@
\zzz0.5,1`lr...`\omega_1...90`a_5@
;
\draw (o)
\zzz1,1`r...swap`\tcyan{\alpha\beta}\omega_3...[inner sep=0pt]-45`a_3@
\zzz1,1`rr...swap`\tcyan{\alpha\beta}\omega_5...[inner sep=0pt]-45`\tcyan{v'}@
\zzz-0.5,1`rrl...`\omega_6...90`a_6@
;
\draw (r)
\zzz-0.5,1`rl...`\omega_4...90`a_4@
;
\draw[dashed,cyan] (rr)
\zzz1,1`oo,black...swap`\alpha\beta...[inner sep=0pt]-45`\tblk{b_1}@
;
\draw (oo)
\zzz-0.5,1`ool...`\tred\alpha\delta_2...0`\tred{v}@
\zzz0.5,1`oolr...swap,pos=0.8`\delta_3...90`b_4@
;
\draw (oo)
\zzz1,1`oor...swap`\delta_1...0`b_3@
;
\draw (ool)
\eee-0.5,1`ooll`S@
;
\draw[red,dashed](ool)--(ooll)
\xxx`\alpha@
;
}\\
&=(S\rjt{\alpha}{v}T)\rjt{\alpha\beta}{v'}U.
\end{align*}
\end{exam}

\begin{prop}
Let $X$ be a set and let $\Omega$ be a commutative semigroup. Then $\bigl((\bfk\calt,(\rhd_\omega)_{\omega\in\Omega}\bigr)$ is a pre-Lie family algebra.
\mlabel{prop:comm}
\end{prop}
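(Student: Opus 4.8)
The plan is to verify the pre-Lie family identity~\eqref{eq:pre} for the grafting operations $\rhd_\omega$ on $\bfk\calt$ by expanding both sides as sums over vertices and matching terms via Lemmas~\ref{lemma:one} and~\ref{lemma:two}. Since $\rhd_\omega$ is bilinear by definition, it suffices to check the identity on basis elements, so I would fix three typed decorated rooted trees $S,T,U\in\calt$ and two semigroup elements $\alpha,\beta\in\Omega$, and compute the associator-type expression $S\rhd_\alpha(T\rhd_\beta U)-(S\rhd_\alpha T)\rhd_{\alpha\beta}U$, aiming to show it is symmetric under exchanging the pairs $(S,\alpha)$ and $(T,\beta)$.

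The key computation is to unfold the definition $S\rhd_\omega T=\sum_{v\in\ver(T)}S\rjt{\omega}{v}T$ on each term. First I would expand
\begin{align*}
S\rhd_\alpha(T\rhd_\beta U)&=\sum_{v'\in\ver(U)}S\rhd_\alpha\bigl(T\rjt{\beta}{v'}U\bigr)\\
&=\sum_{v'\in\ver(U)}\ \sum_{v\in\ver(T\rjt{\beta}{v'}U)}S\rjt{\alpha}{v}\bigl(T\rjt{\beta}{v'}U\bigr).
\end{align*}
The crucial observation is that the vertex set of $T\rjt{\beta}{v'}U$ is the disjoint union $\ver(T)\sqcup\ver(U)$, so the inner sum splits according to whether the grafting vertex $v$ lies in $T$ or in $U$. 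When $v\in\ver(U)$, the new copy of $S$ is grafted onto $U$ independently of where $T$ was attached, and Lemma~\ref{lemma:one} applies; when $v\in\ver(T)$, the copy of $S$ is grafted onto the already-attached $T$, and Lemma~\ref{lemma:two} applies to rewrite $S\rjt{\alpha}{v}(T\rjt{\beta}{v'}U)=(S\rjt{\alpha}{v}T)\rjt{\alpha\beta}{v'}U$. Performing the same split on $(S\rhd_\alpha T)\rhd_{\alpha\beta}U=\sum_{v'\in\ver(U)}\sum_{v\in\ver(T)}(S\rjt{\alpha}{v}T)\rjt{\alpha\beta}{v'}U$, I would see that precisely the $v\in\ver(T)$ terms cancel against the second piece of the expansion above (this is exactly Lemma~\ref{lemma:two}). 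What survives on the left-hand side of~\eqref{eq:pre} is therefore
$$\sum_{v'\in\ver(U)}\ \sum_{v\in\ver(U)}S\rjt{\alpha}{v}\bigl(T\rjt{\beta}{v'}U\bigr),$$
a double sum over vertices of $U$ of terms that are grafts of $S$ and $T$ onto $U$ at independent vertices.

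To finish, I would observe that this surviving double sum is manifestly symmetric under the simultaneous swap $(S,\alpha)\leftrightarrow(T,\beta)$, because Lemma~\ref{lemma:one} gives $S\rjt{\alpha}{v}(T\rjt{\beta}{v'}U)=T\rjt{\beta}{v'}(S\rjt{\alpha}{v}U)$ term by term, and the double sum over the pair $(v,v')\in\ver(U)\times\ver(U)$ is invariant under relabeling. Since the right-hand side $T\rhd_\beta(S\rhd_\alpha U)-(T\rhd_\beta S)\rhd_{\beta\alpha}U$ reduces by the identical argument (with the roles of $S,\alpha$ and $T,\beta$ interchanged, and using $\alpha\beta=\beta\alpha$) to exactly the same surviving double sum, identity~\eqref{eq:pre} follows. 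The main point requiring care—and the step I expect to be the real obstacle—is the bookkeeping of the vertex set of $T\rjt{\beta}{v'}U$ and the correct matching of the cancelling $v\in\ver(T)$ terms via Lemma~\ref{lemma:two}; one must be sure that the type modifications (multiplication by $\alpha\beta$ on edges below $v'$) agree exactly on both sides, which is precisely what commutativity of $\Omega$ guarantees.
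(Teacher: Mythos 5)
Your proposal is correct and follows essentially the same route as the paper's proof: expand the left-hand side, split the sum over $\ver(T)\sqcup\ver(U)$, cancel the $v\in\ver(T)$ part against $(S\rhd_\alpha T)\rhd_{\alpha\beta}U$ via Lemma~\ref{lemma:two}, and then apply Lemma~\ref{lemma:one} to the surviving double sum over $\ver(U)\times\ver(U)$ to obtain the symmetrized right-hand side. The bookkeeping point you flag (agreement of the type modifications, guaranteed by commutativity of $\Omega$) is exactly what those two lemmas encapsulate, so nothing further is needed.
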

\begin{proof}
For $S,T,U\in\calt$ and $\alpha,\beta\in\Omega$, we have
\begin{align*}
& S\rhd_\alpha(T\rhd_\beta U)-(S\rhd_\alpha T)\rhd_{\alpha\beta}U\\
={}&\sum_{v\in\ver(T)\sqcup\ver(U)}\sum_{v'\in\ver(U)}S\rjt{\alpha}{v}(T
\rjt{\beta}{v'}U)-\sum_{v\in\ver(T)}\sum_{v'\in\ver(U)}(S
\rjt{\alpha}{v}T)\rjt{\alpha\beta}{v'}U\\
={}&\sum_{v\in\ver(T)}\sum_{v'\in\ver(U)}S\rjt{\alpha}{v}(T
\rjt{\beta}{v'}U)
+\sum_{v\in\ver(U)}\sum_{v'\in\ver(U)}S\rjt{\alpha}{v}(T
\rjt{\beta}{v'}U)\\
&-\sum_{v\in\ver(T)}\sum_{v'\in\ver(U)}(S
\rjt{\alpha}{v}T)\rjt{\alpha\beta}{v'}U\\
={}&\sum_{v\in\ver(U)}\sum_{v'\in\ver(U)}S\rjt{\alpha}{v}(T
\rjt{\beta}{v'}U)\quad(\text{by Lemma~\mref{lemma:two}})\\
={}&\sum_{v\in\ver(U)}\sum_{v'\in\ver(U)}T\rjt{\beta}{v'}(S
\rjt{\alpha}{v}U)\quad(\text{by Lemma~\mref{lemma:one}})\\
={}&T\rhd_\beta(S\rhd_\alpha U)-(T\rhd_\beta S)\rhd_{\beta\alpha}U.
\end{align*}
\end{proof}

\noindent For better understanding Proposition~\mref{prop:comm}, we give the following example.
\begin{exam}
Let $\Omega$ be a commutative semigroup.
Let $T=\bullet_b$ and $U=\treeo{
\oo`-90`c_1@
\draw (o)
\zzz0,1`r...swap`\gamma...90`c_2@
;}$. Then
\begin{align*}
& S\rhd_\alpha(T\rhd_\beta U)-(S\rhd_\alpha T)\rhd_{\alpha\beta}U\\
={}&\sum_{v\in\ver(T)\sqcup\ver(U)}\sum_{v'\in\ver(U)}S\rjt{\alpha}{v}(T
\rjt{\beta}{v'}U)-\sum_{v\in\ver(T)}\sum_{v'\in\ver(U)}(s
\rjt{\alpha}{v}T)\rjt{\alpha\beta}{v'}U\\
={}&\sum_{v\in\ver(T)}\sum_{v'\in\ver(U)}S\rjt{\alpha}{v}(T
\rjt{\beta}{v'}U)
+\sum_{v\in\ver(U)}\sum_{v'\in\ver(U)}s\rjt{\alpha}{v}(T
\rjt{\beta}{v'}U)
-\sum_{v\in\ver(T)}\sum_{v'\in\ver(U)}(S
\rjt{\alpha}{v}T)\rjt{\alpha\beta}{v'}U\\
={}&\treeo{
\oo`-90`c_1@
\draw (o)
\zzz-1,1`l...`\gamma...90`c_2@
;
\draw[dashed,red] (o)
\zzz1,1`r...swap`\tcyan\alpha\beta...[inner sep=-1.5pt]0`b@
node[circle,draw,solid,minimum size=2.7ex]{}
;
\draw[cyan] (r)
\eee0,1.2`ra`S@
;
\draw[dashed,cyan] (r)--(ra)
\xxx swap,pos=0.8`\alpha@
;
}+\treeo{
\oo`-90`c_1@
\draw (o)
\zzz0,1`a...swap`\tcyan\alpha\tred\beta\gamma...0`c_2@
;
\draw[dashed,red] (a)
\zzz0,1.2`aa...swap`\tcyan\alpha\tred\beta...[inner sep=-1.5pt]0`b@
node[circle,draw,solid,minimum size=2.7ex]{}
;
\draw[cyan] (aa)
\eee0,1.2`aaa`S@
;
\draw[dashed,cyan] (aa)--(aaa)
\xxx swap,pos=0.8`\tcyan\alpha@
;
}+\treeo{
\oo`-90`c_1@
\draw (o)
\zzz0,1`a...pos=0.7`\gamma...90`c_2@
;
\draw[dashed,red] (o)
\zzz1,1`r...swap`\tred\beta...[inner sep=-1.5pt]0`b@
node[circle,draw,solid,minimum size=2.7ex]{}
;
\draw[cyan] (o)
\eee-1,1`l`S@
;
\draw[dashed,cyan] (o)--(l)
\xxx`\tcyan\alpha@
;
}+\treeo{
\oo`-90`c_1@
\draw (o)
\zzz0,1`a...`\tred\beta\gamma...180`c_2@
;
\draw[dashed,red] (a)
\zzz0,1.2`aa...`\tred\beta...[inner sep=-1.5pt]0`b@
node[circle,draw,solid,minimum size=2.7ex]{}
;
\draw[cyan] (o)
\eee1,1`r`S@
;
\draw[dashed,cyan] (o)--(r)
\xxx swap`\tcyan\alpha@
;
}
+\treeo{
\oo`-90`c_1@
\draw (o)
\zzz0,1`a...`\tcyan\alpha\gamma...180`c_2@
;
\draw[cyan] (a)
\eee0,1.2`aa`S@
;
\draw[dashed,cyan] (a)--(aa)
\xxx`\tcyan\alpha@
;
\draw[dashed,red] (o)
\zzz1,1`r...swap`\tred\beta...[inner sep=-1.5pt]0`b@
node[circle,draw,solid,minimum size=2.7ex]{}
;
}+\treeo{
\oo`-90`c_1@
\draw (o)
\zzz0,1`a...`\tred\beta\tcyan\alpha\gamma...90`c_2@
;
\draw[dashed,red] (a)
\zzz-1,1`al...`\tred\beta...[inner sep=-1.5pt]180`b@
node[circle,draw,solid,minimum size=2.7ex]{}
;
\draw[cyan] (a)
\eee1,1`ar`S@
;
\draw[dashed,cyan] (a)--(ar)
\xxx swap`\tcyan\alpha@
;
}
-\left(
\treeo{
\oo`-90`c_1@
\draw (o)
\zzz0,1`a...swap`\tcyan{\alpha\beta}\gamma...0`c_2@
;
\draw[dashed,cyan] (a)
\zzz0,1.2`aa...swap`\alpha\beta...[inner sep=-1.5pt]0`b@
node[circle,draw,solid,minimum size=2.7ex]{}
;
\draw[red] (aa)
\eee0,1.2`aaa`S@
;
\draw[dashed,red] (aa)--(aaa)
\xxx swap,pos=0.8`\tred\alpha@
;
}+\treeo{
\oo`-90`c_1@
\draw (o)
\zzz1,1`r...swap`\gamma...90`c_2@
;
\draw[dashed,cyan] (o)
\zzz-1,1`l...`\alpha\beta...[inner sep=-1.5pt]180`b@
node[circle,draw,solid,minimum size=2.7ex]{}
;
\draw[red] (l)
\eee0,1.2`la`S@
;
\draw[dashed,red] (l)--(la)
\xxx pos=0.8`\alpha@
;
}
\right)\\
={}&\treeo{
\oo`-90`c_1@
\draw (o)
\zzz0,1`a...pos=0.7`\gamma...90`c_2@
;
\draw[dashed,red] (o)
\zzz1,1`r...swap`\tred\beta...[inner sep=-1.5pt]0`b@
node[circle,draw,solid,minimum size=2.7ex]{}
;
\draw[cyan] (o)
\eee-1,1`l`S@
;
\draw[dashed,cyan] (o)--(l)
\xxx`\tcyan\alpha@
;
}+\treeo{
\oo`-90`c_1@
\draw (o)
\zzz0,1`a...`\tred\beta\gamma...180`c_2@
;
\draw[dashed,red] (a)
\zzz0,1.2`aa...`\tred\beta...[inner sep=-1.5pt]0`b@
node[circle,draw,solid,minimum size=2.7ex]{}
;
\draw[cyan] (o)
\eee1,1`r`S@
;
\draw[dashed,cyan] (o)--(r)
\xxx swap`\tcyan\alpha@
;
}
+\treeo{
\oo`-90`c_1@
\draw (o)
\zzz0,1`a...`\tcyan\alpha\gamma...180`c_2@
;
\draw[cyan] (a)
\eee0,1.2`aa`S@
;
\draw[dashed,cyan] (a)--(aa)
\xxx`\tcyan\alpha@
;
\draw[dashed,red] (o)
\zzz1,1`r...swap`\tred\beta...[inner sep=-1.5pt]0`b@
node[circle,draw,solid,minimum size=2.7ex]{}
;
}+\treeo{
\oo`-90`c_1@
\draw (o)
\zzz0,1`a...`\tred\beta\tcyan\alpha\gamma...90`c_2@
;
\draw[dashed,red] (a)
\zzz-1,1`al...`\tred\beta...[inner sep=-1.5pt]180`b@
node[circle,draw,solid,minimum size=2.7ex]{}
;
\draw[cyan] (a)
\eee1,1`ar`S@
;
\draw[dashed,cyan] (a)--(ar)
\xxx swap`\tcyan\alpha@
;
}\\
={}&\sum_{v\in\ver(U)}\sum_{v'\in\ver(U)}T\rjt{\beta}{v'}(S
\rjt{\alpha}{v}U)\\
={}&T\rhd_\beta(S\rhd_\alpha U)-(T\rhd_\beta S)\rhd_{\beta\alpha}U.
\end{align*}
\end{exam}

Let $X$ be a set and let $\Omega$ be a commutative semigroup.
Denote by
\begin{equation*}
\widetilde{\cal{T}}(X,\Omega):=\calt\ot\bfk\Omega.
\mlabel{eq:cal}
\end{equation*}
The following result precises the link between pre-Lie family algebras and ordinary pre-Lie algebras.
\begin{theorem}\mlabel{thm:pre}
$(\bfk\widetilde{\cal{T}}(X,\Omega),\rhd)$ is a pre-Lie algebra, if and only if, $(\bfk\calt, (\rhd_\omega)_{\omega\in\Omega})$ is a pre-Lie family algebra, where
\begin{equation}
(S\ot\alpha)\rhd(T\ot\beta):=(S\rhd_\alpha T)\ot\alpha\beta,\,\text{ for }\, S,T\in \calt\,\text{ and }\, \alpha,\beta\in\Omega.
\mlabel{eq:bullet}
\end{equation}
\end{theorem}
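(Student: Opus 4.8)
The plan is to test the pre-Lie identity on basis elements of $\bfk\widetilde{\cal{T}}(X,\Omega)$ and to recognise it, coefficient by coefficient, as the pre-Lie family identity~(\mref{eq:pre}). Since $\rhd$ is bilinear and $\bfk\widetilde{\cal{T}}(X,\Omega)=\bfk\calt\ot\bfk\Omega$ has basis $\{S\ot\omega \mid S\in\calt,\ \omega\in\Omega\}$, the pre-Lie identity holds for all elements if and only if it holds for all basis triples $x=S\ot\alpha$, $y=T\ot\beta$, $z=U\ot\gamma$. First I would compute, using only the defining formula~(\mref{eq:bullet}), that
\begin{align*}
&(S\ot\alpha)\rhd\bigl((T\ot\beta)\rhd(U\ot\gamma)\bigr)-\bigl((S\ot\alpha)\rhd(T\ot\beta)\bigr)\rhd(U\ot\gamma)\\
={}&\bigl(S\rhd_\alpha(T\rhd_\beta U)-(S\rhd_\alpha T)\rhd_{\alpha\beta}U\bigr)\ot\alpha\beta\gamma,\\
&(T\ot\beta)\rhd\bigl((S\ot\alpha)\rhd(U\ot\gamma)\bigr)-\bigl((T\ot\beta)\rhd(S\ot\alpha)\bigr)\rhd(U\ot\gamma)\\
={}&\bigl(T\rhd_\beta(S\rhd_\alpha U)-(T\rhd_\beta S)\rhd_{\beta\alpha}U\bigr)\ot\beta\alpha\gamma.
\end{align*}

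The key structural observation is that both right-hand sides are concentrated on the single line $\bfk\calt\ot\bfk\omega_0$ with $\omega_0=\alpha\beta\gamma=\beta\alpha\gamma$, and it is exactly the commutativity of $\Omega$ that guarantees $\alpha\beta\gamma=\beta\alpha\gamma$. This is why the hypothesis is needed: without it the two associators would live on the distinct lines indexed by $\alpha\beta\gamma$ and $\beta\alpha\gamma$, and equating them would impose a strictly stronger condition than~(\mref{eq:pre}). For the ``if'' direction, assuming $(\bfk\calt,(\rhd_\omega)_{\omega\in\Omega})$ is a pre-Lie family algebra, identity~(\mref{eq:pre}) makes the two bracketed coefficients in $\bfk\calt$ equal; tensoring with $\omega_0$ equates the two displayed expressions, so the pre-Lie identity holds on every basis triple and hence, by bilinearity, on all of $\bfk\widetilde{\cal{T}}(X,\Omega)$.

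For the ``only if'' direction --- which I expect to be the only genuinely delicate point --- I would cancel the common tensor factor $\omega_0$. Because $\omega_0$ is a single element of $\Omega$, it is a basis vector of $\bfk\Omega$, so the map $\bfk\calt\to\bfk\calt\ot\bfk\Omega$, $P\mapsto P\ot\omega_0$, is injective (it is the inclusion of $\bfk\calt$ as the $\omega_0$-summand of $\bfk\calt\ot\bfk\Omega\cong\bigoplus_{\omega\in\Omega}\bfk\calt$). Hence equality of the two expressions above forces equality of their coefficients, namely
$$S\rhd_\alpha(T\rhd_\beta U)-(S\rhd_\alpha T)\rhd_{\alpha\beta}U=T\rhd_\beta(S\rhd_\alpha U)-(T\rhd_\beta S)\rhd_{\beta\alpha}U,$$
which is precisely~(\mref{eq:pre}). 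As $\alpha,\beta\in\Omega$ are arbitrary (any fixed $\gamma$ will do, $\Omega$ being nonempty), this yields the family identity for all $\alpha,\beta$, so $(\bfk\calt,(\rhd_\omega)_{\omega\in\Omega})$ is a pre-Lie family algebra. Thus the main difficulty is not computational but bookkeeping: placing the use of commutativity exactly at the step that forces the two associators onto the same graded line, and justifying the cancellation of $\omega_0$ through the freeness of $\bfk\Omega$ on the set $\Omega$.
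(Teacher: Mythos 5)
Your proposal is correct and follows essentially the same route as the paper: compute both associators on basis elements via Eq.~(\mref{eq:bullet}), observe that commutativity of $\Omega$ puts them on the same summand $\bfk\calt\ot\bfk(\alpha\beta\gamma)$, and cancel that tensor factor to recover Eq.~(\mref{eq:pre}). Your explicit justification of the cancellation step (injectivity of $P\mapsto P\ot\omega_0$) is a small refinement of the paper's terser converse, but not a different argument.
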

\begin{proof}
For $S,T,U\in\calt$ and $\alpha,\beta,\gamma\in\Omega$, we have
\begin{align*}
&(T\ot\alpha)\rhd\Big((S\ot\beta)\rhd(U\ot\gamma)\Big)
-\Big((T\ot\alpha)\rhd(S\ot\beta)\Big)\rhd(U\ot\gamma)\\
={}&(T\ot\alpha)\rhd\Big((S\rhd_\beta U)\ot\beta\gamma\Big)-\Big((T\rhd_\alpha S)\ot\alpha\beta\Big)\rhd(U\ot\gamma)\quad(\text{by Eq.~(\mref{eq:bullet})})\\
={}&\Big(T\rhd_\alpha(S\rhd_\beta U)\Big)\ot\alpha\beta\gamma-\Big((T\rhd_\alpha S)\rhd_{\alpha\beta}U\Big)\ot\alpha\beta\gamma\quad(\text{by Eq.~(\mref{eq:bullet})})\\
={}&\Big(T\rhd_\alpha(S\rhd_\beta U)-(T\rhd_\alpha S)\rhd_{\alpha\beta}U\Big)\ot\alpha\beta\gamma\\
={}&\Big(S\rhd_\beta(T\rhd_\alpha U)-(S\rhd_\beta T)\rhd_{\beta\alpha}U\Big)\ot\beta\alpha\gamma\quad(\text{by Eq.~(\mref{eq:pre})})\\
={}&(S\ot\beta)\rhd\Big((T\rhd_\alpha U)\ot\alpha\gamma\Big)-\Big((S\rhd_\beta T)\ot\beta\alpha\Big)\rhd(U\ot\gamma)\\
={}&(S\ot\beta)\rhd\Big((T\ot\alpha)\rhd(U\ot\gamma)\Big)
-\Big((S\ot\beta)\rhd(T\ot\alpha)\Big)\rhd(U\ot\gamma),
\end{align*}
as required. Conversely, we obtain
\begin{align*}
&\Big(T\rhd_\alpha(S\rhd_\beta U)-(T\rhd_\alpha S)\rhd_{\alpha\beta}U\Big)\ot\alpha\beta\gamma\\
={}&(T\ot\alpha)\rhd\Big((S\ot\beta) \rhd(U\ot\gamma)\Big)-\Big((T\ot\alpha)\rhd(S\ot\beta)\Big)\rhd(U\ot\gamma)\\
={}&(S\ot\beta)\rhd\Big((T\ot\alpha)\rhd(U\ot\gamma)\Big)
-\Big((S\ot\beta)\rhd(T\ot\alpha)\Big)\rhd(U\ot\gamma)\\
={}&(S\ot\beta)\rhd\Big((T\rhd_\alpha U)\ot\alpha\gamma\Big)-\Big((S\rhd_\beta T)\ot\beta\alpha\Big)\rhd(U\ot\gamma)\\
={}&\Big(S\rhd_\beta(T\rhd_\alpha U)\Big)\ot\beta\alpha\gamma
-\Big((S\rhd_\beta T)\rhd_{\beta\alpha}U\Big)\ot\beta\alpha\gamma\\
={}&\Big(S\rhd_\beta(T\rhd_\alpha U)-(S\rhd_\beta T)\rhd_{\beta\alpha}U\Big)\ot\beta\alpha\gamma.
\end{align*}
Since $\Omega$ is a commutative semigroup, we have
\[T\rhd_\alpha(S\rhd_\beta U)-(T\rhd_\alpha S)\rhd_{\alpha\beta}U=S\rhd_\beta(T\rhd_\alpha U)-(S\rhd_\beta T)\rhd_{\beta\alpha}U. \qedhere \]
\end{proof}
\subsection{Free pre-Lie family algebras}
In this subsection, we show the freeness of the pre-Lie family algebras defined in Paragraph \ref{sub:dd} above.
Let $\Omega$ be a commutative semigroup. Let $(L,(\rhd_\omega)_{\omega\in\Omega})$ be any pre-Lie family algebra. For $x_1,x_2,y\in L$ and $\alpha,\beta\in\Omega,$ define
\begin{equation}
(x_1x_2)\rhd_{\alpha,\beta}y:=x_1\rhd_\alpha(x_2\rhd_\beta y)-(x_1\rhd_\alpha x_2)\rhd_{\alpha\beta}y.
\mlabel{eq:decom1}
\end{equation}
This is symmetric in $(x_1,\alpha)$ and $(x_2,\beta)$ by definition of a pre-Lie family algebra.

Now we define $(x_1\cdots x_n)\dd\omega_1,\ldots,\omega_n;y$, where $x_1\cdots x_n$ is a monomial of $T(L)$ and $y\in L.$
\begin{defn}
For $x_1\cdots x_n, y\in L$ and $\omega_1,\ldots,\omega_n\in\Omega$, we define recursively multilinear maps
\begin{align*}
\dd\omega_1,\ldots,\omega_n;:L^{\ot n}\ot L& \ra L\\
x_1\cdots x_n\ot y& \mapsto (x_1\cdots x_n)\dd\omega_1,\ldots,\omega_n;y
\end{align*}
in the following way: $1\dd\omega; y:=y$ and we define
\begin{align}
&(x_1\cdots x_n)\rhd_{\omega_1,\ldots,\omega_n} y\nonumber\\
:={}&x_1\rhd_{\omega_1}\Big((x_2\cdots x_n)\rhd_{\omega_2,\ldots,\omega_n} y\Big)-\sum^n_{j=2}\Big(x_2\cdots x_{j-1}(x_1\rhd_{\omega_1}x_j)x_{j+1}\cdots x_n\Big)\dd\omega_2,\ldots,\omega_{j-1},\omega_1\omega_j,\omega_{j+1},\ldots,\omega_n; y.
\mlabel{eq:sym}
\end{align}
\end{defn}

\begin{prop}
Let $\Omega$ be a commutative semigroup. Let $(L,(\rhd_\omega)_{\omega\in\Omega})$ be any pre-Lie family algebra. The expression $(x_1\cdots x_n)\rhd_{\omega_1,\ldots,\omega_n} y$ defined in Eq.~(\mref{eq:sym}) is symmetric in $(x_1,\omega_1),\ldots, (x_n,\omega_n).$ Therefore, it defines a map:
\begin{align*}
\dd;:S^n(L\otimes \bfk\Omega)\ot L& \ra L\\
(x_1\otimes\omega_1\cdots x_n\otimes\omega_n)\ot y& \mapsto (x_1\cdots x_n)\dd\omega_1,\ldots,\omega_n;y.
\end{align*}
\end{prop}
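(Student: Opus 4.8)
The plan is to prove the symmetry by induction on $n$, after reducing it to invariance under a generating set of the symmetric group $S_n$. The cases $n\le 2$ are immediate: for $n\le 1$ there is nothing to prove, and for $n=2$ the expression $(x_1x_2)\rhd_{\omega_1,\omega_2}y$ is precisely the element defined in Eq.~(\mref{eq:decom1}), which is symmetric in $(x_1,\omega_1)$ and $(x_2,\omega_2)$ by the defining axiom Eq.~(\mref{eq:pre}) of a pre-Lie family algebra. For the inductive step I assume that the $m$-fold bracket is symmetric in its $m$ pairs for every $m<n$, and I show that the $n$-fold bracket is invariant both under every permutation fixing the first slot and under the transposition of the first two slots. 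Since these two families of permutations generate $S_n$ (any $(1\,k)$ is a conjugate of $(1\,2)$ by an element fixing $1$), full symmetry will follow.

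First I would establish invariance under an arbitrary permutation of $(x_2,\omega_2),\ldots,(x_n,\omega_n)$. In the recursion Eq.~(\mref{eq:sym}), the leading term $x_1\rhd_{\omega_1}\big((x_2\cdots x_n)\rhd_{\omega_2,\ldots,\omega_n}y\big)$ is invariant because the inner $(n-1)$-fold bracket is symmetric by the inductive hypothesis. In the correction sum, the $j$-th summand is an $(n-1)$-fold bracket applied to the family obtained by replacing the pair $(x_j,\omega_j)$ with $(x_1\rhd_{\omega_1}x_j,\omega_1\omega_j)$; again by the inductive symmetry of the $(n-1)$-fold bracket, each summand depends only on the underlying multiset of pairs, so permuting $(x_2,\omega_2),\ldots,(x_n,\omega_n)$ merely relabels the summation index $j$ and leaves the total sum unchanged.

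The heart of the argument, and the step I expect to be the main obstacle, is invariance under exchanging $(x_1,\omega_1)$ and $(x_2,\omega_2)$. Here I would expand both $(x_1x_2\cdots x_n)\rhd_{\omega_1,\omega_2,\ldots,\omega_n}y$ and $(x_2x_1\cdots x_n)\rhd_{\omega_2,\omega_1,\ldots,\omega_n}y$ by applying the recursion Eq.~(\mref{eq:sym}) twice, peeling off the two leading pairs so that everything is rewritten in terms of $(n-2)$-fold brackets on $(x_3,\omega_3),\ldots,(x_n,\omega_n)$, which are fully symmetric by induction. Writing $W:=(x_3\cdots x_n)\rhd_{\omega_3,\ldots,\omega_n}y$, the purely leading contributions assemble into
\[
\Big(x_1\rhd_{\omega_1}(x_2\rhd_{\omega_2}W)-(x_1\rhd_{\omega_1}x_2)\rhd_{\omega_1\omega_2}W\Big)-\Big(x_2\rhd_{\omega_2}(x_1\rhd_{\omega_1}W)-(x_2\rhd_{\omega_2}x_1)\rhd_{\omega_2\omega_1}W\Big),
\]
which vanishes by Eq.~(\mref{eq:pre}). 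The single-correction terms, of the form $x_i\rhd_{\omega_i}(\text{an }(n-2)\text{-bracket with one pair modified})$, cancel in pairs between the two expansions, while the double-correction terms, in which two distinct pairs $(x_j,\omega_j)$ and $(x_k,\omega_k)$ are modified, cancel after swapping the summation indices $j\leftrightarrow k$ and invoking the symmetry of the $(n-2)$-fold bracket.

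What remains are the terms in which a single pair $(x_j,\omega_j)$ is modified twice; collecting these over the two expansions and using multilinearity of the bracket in its $j$-th slot, their sum is an $(n-2)$-fold bracket whose $j$-th entry is
\[
x_2\rhd_{\omega_2}(x_1\rhd_{\omega_1}x_j)-(x_2\rhd_{\omega_2}x_1)\rhd_{\omega_2\omega_1}x_j-x_1\rhd_{\omega_1}(x_2\rhd_{\omega_2}x_j)+(x_1\rhd_{\omega_1}x_2)\rhd_{\omega_1\omega_2}x_j,
\]
carrying the common type $\omega_1\omega_2\omega_j$. This is precisely where commutativity of $\Omega$ is indispensable: it is what makes the four modified types coincide, so that multilinearity applies and the four brackets genuinely share a single slot, after which Eq.~(\mref{eq:pre}) forces the displayed element to be zero. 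The genuine difficulty is therefore organisational: one must expand carefully, track the type decorations through each grafting, and sort the resulting terms into the four groups above so that each group is seen to cancel. Once the exchange of the first two slots and the permutations fixing the first slot are both established, they generate $S_n$, and the proof is complete; the resulting symmetric multilinear map then descends to $S^n(L\otimes\bfk\Omega)\otimes L$ as claimed.
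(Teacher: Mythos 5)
Your proposal is correct and follows essentially the same route as the paper: induction on $n$, with the base case $n=2$ given by the pre-Lie family identity, symmetry in the last $n-1$ slots coming from the inductive hypothesis, and symmetry under swapping the first two slots obtained by expanding the recursion twice and sorting the terms into exactly the groups the paper uses (your ``purely leading'' and ``doubly-modified slot'' groups are the paper's combination of its first, fourth and sixth terms via the $2$-fold bracket, your single-correction pairs are its second and fifth terms, and your $j\neq k$ double-correction group is its third term). The points you flag as essential --- the generation of $S_n$ by $(1\,2)$ and the stabilizer of $1$, and the use of commutativity of $\Omega$ to make the modified types coincide before applying Eq.~(\mref{eq:pre}) in the $j$-th slot --- are precisely where the paper's argument relies on the same facts.
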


\begin{proof}
The invariance by permutation of the variables $(x_1,\omega_1),\ldots,(x_n,\omega_n)$ is obtained by induction on $n\geq 2$. For the initial step $n=2$, it is exactly the pre-Lie family identity~(\mref{eq:decom1}). Denote by $\hat{x_j}$ and $\hat{\omega_j}$ the elements, which don't appear in the terms. For the induction on $n\geq 3$, we have
\begin{align*}
&(x_1\cdots x_n)\dd\omega_1,\ldots,\omega_n;y\\
={}& x_1\dd\omega_1;\Big((x_2\cdots x_n)\dd\omega_2,\ldots,\omega_n; y\Big)-\sum^n_{j=3}\Big(x_2\cdots x_{j-1}(x_1\dd\omega_1; x_j)x_{j+1}\cdots x_n\Big)\dd\omega_2,\ldots,\omega_{j-1},\omega_1\omega_j,w_{j+1},\ldots,\omega_n; y\\
& -\Big((x_1\dd\omega_1;x_2)x_3\cdots x_n\Big)\dd\omega_1\omega_2,\omega_3,\ldots,\omega_n; y\\
={}& x_1\dd\omega_1;\biggl(x_2\dd\omega_2;\Big((x_3\cdots x_n)\dd\omega_3,\ldots,\omega_n; y\Big)\\
&-\sum^n_{j=3}\Big(x_3\cdots x_{j-1}(x_2\dd\omega_2; x_j)x_{j+1}\cdots x_n\Big)\dd\omega_3,\ldots,\omega_{j-1},\omega_2\omega_j,\ldots,\omega_n; y\biggl)\\
&\hspace{2cm}\text{ (expand the first term by equation Eq.~(\mref{eq:sym}))}\\
&-\sum^n_{j=3}\Big(x_2\cdots x_{j-1}(x_1\dd\omega_1; x_j)x_{j+1}\cdots x_n\Big)
\dd\omega_2,\ldots,\omega_{j-1},\omega_1\omega_j,\ldots,\omega_n;y\\
&-\Big((x_1\dd\omega_1;x_2)x_3\cdots x_n\Big)\dd\omega_1\omega_2,\omega_3,\ldots,\omega_n;y\\
={}& x_1\dd\omega_1;\Big(x_2\dd\omega_2;\Big((x_3\cdots x_n)\dd\omega_3,\ldots,\omega_n;y\Big)\Big)\\
&-\sum^n_{j=3}x_1\dd\omega_1;\biggl(\Big(x_3\cdots x_{j-1}(x_2\dd\omega_2; x_j)x_{j+1}\cdots x_n\Big)\dd\omega_3,\ldots,\omega_{j-1},\omega_2\omega_j,\ldots,\omega_n;y\biggl)\\
&-\sum^n_{j=3}\Big(x_2\cdots x_{j-1}(x_1\dd\omega_1;x_j)x_{j+1}\cdots x_n\Big)\dd\omega_2,\ldots,\omega_{j-1},\omega_1\omega_j,\ldots,\omega_n;y\\
& -\Big((x_1\dd\omega_1;x_2)x_3\cdots x_n\Big)\dd\omega_1\omega_2,\omega_3,\ldots,\omega_n;y\\
={}& x_1\dd\omega_1;\biggl(x_2\dd\omega_2;\Big((x_3\cdots x_n)\dd\omega_3,\ldots,\omega_n;y\Big)\biggl)\\
&-\sum^n_{j=3}x_1\dd\omega_1;\biggl(\Big((x_2\dd\omega_2;x_j)x_3\cdots \hat{x_j}\cdots x_n\Big)\dd\omega_2\omega_j,\omega_3,\ldots,\hat{\omega_j},\ldots,\omega_n;y\biggl)\\
&-\sum^n_{j=3}\Big((x_1\dd\omega_1;x_j)x_2\cdots \hat{x_j}\cdots x_n\Big)\dd\omega_1\omega_j,\omega_2,\ldots,\hat{\omega_j},\ldots,\omega_n;y\\
&-\Big((x_1\dd\omega_1;x_2)x_3\cdots x_n\Big)\dd\omega_1\omega_2,\omega_3,\ldots,\omega_n;y\quad\text{(by the induction hypothesis)}\\
={}& x_1\dd\omega_1;\biggl(x_2\dd\omega_2;\Big((x_3\cdots x_n)\dd\omega_3,\ldots,\omega_n;y\Big)\biggl)\\
&-\sum^n_{j=3}\Big(x_1(x_2\dd\omega_2;x_j)x_3\cdots\hat{x_j}\cdots x_n\Big)\dd\omega_1,\omega_2\omega_j,\omega_3,\ldots,\hat{\omega_j},\ldots,\omega_n;y\\
&-\sum^n_{\underset{j\neq k}{j,k=3}}\Big((x_2\dd\omega_2;x_j)(x_1\dd\omega_1;x_k)x_3\cdots\hat{x_j}\cdots \hat{x_k}\cdots x_n\Big)\dd\omega_2\omega_j,\omega_1\omega_k,\omega_3,\ldots,\hat{\omega_j},
\cdots,\hat{\omega_k},\ldots,\omega_n;y\\
&-\sum^n_{j=3}\biggl(x_3\cdots x_{j-1}\Big(x_1\dd\omega_1;(x_2\dd\omega_2;x_j)\Big)\cdots x_n\biggl)\dd\omega_3,\ldots,\omega_{j-1},\omega_1\omega_2\omega_j,
\cdots,\hat{\omega_j},\ldots,\omega_n; y\\
&-\sum^n_{j=3}\Big(x_2(x_1\dd\omega_1;x_j)x_3\cdots \hat{x_j}\cdots x_n\Big)\dd\omega_2,\omega_1\omega_j,\omega_3,\ldots,\hat{\omega_j},\ldots,\omega_n;y\\
&-\Big((x_1\dd\omega_1;x_2)x_3\cdots x_n\Big)\dd\omega_1\omega_2,\omega_3,\ldots,\omega_n;y\quad\text{(expand the second term of the last equation)}
\end{align*}
In the above equality, the sum of the second term and the fifth term is obviously symmetric in $(x_1,\omega_1)$ and $(x_2,\omega_2)$. The third term is also obviously symmetric in $(x_1,\omega_1)$ and $(x_2,\omega_2)$ by itself. We are left to see the remaining three terms, that is, the first term, the fourth term and the sixth term. By the pre-Lie family relation, we have
\begin{align*}
& \text{ $1$th term + $4$th term + $6$th term }\\
={}& (x_1x_2)\dd\omega_1,\omega_2;\Big((x_3\cdots x_n)\dd\omega_3,\ldots,\omega_n;y\Big)
+(x_1\dd\omega_1;x_2)\dd\omega_1\omega_2;\Big((x_3\cdots x_n)\dd\omega_3,\ldots,\omega_n;y\Big)\\
&\hspace{3cm}\text{(expand the first term of last equation)}\\
&-\sum^n_{j=3}\biggl(x_3\cdots x_{j-1}\Big(x_1\dd\omega_1;(x_2\dd\omega_2;x_j)\Big)\cdots x_n\biggl)
\dd\omega_3,\ldots,\omega_{j-1},\omega_1\omega_2\omega_j,
\cdots,\hat{\omega_j},\ldots,\omega_n;y\\
&-(x_1\dd\omega_1;x_2)\dd\omega_1\omega_2;\Big((x_3\cdots x_n)\dd\omega_3,\ldots,\omega_n;y\Big)\\
&+\sum^n_{j=3}\biggl(x_3\cdots x_{j-1}\Big((x_1\dd\omega_1; x_2)\dd\omega_1\omega_2;x_j\Big)\cdots x_n\biggl)
\dd\omega_3,\ldots,\omega_{j-1},
\omega_1\omega_2\omega_j,\ldots,\hat{\omega_j},\ldots,\omega_n;y\\
&\hspace{3cm}\text{ (expand the sixth term by Eq.~(\mref{eq:sym}))}\\
={}& (x_1x_2)\dd\omega_1,\omega_2;\Big((x_3\cdots x_n)\dd\omega_3,\ldots,\omega_n;y\Big)\\
&-\sum^n_{j=3}\biggl(x_3\cdots x_{j-1}\Big((x_1x_2)\dd\omega_1,\omega_2;x_j\Big)\cdots x_n\biggl)\dd\omega_3,\ldots,
\omega_{j-1},\omega_1\omega_2\omega_j,\ldots,\hat{\omega_j},\ldots,\omega_n;y.\\
&\hspace{3cm}\text{ (by the pre-Lie family relation)}
\end{align*}
Hence the sum is obviously symmetric in $(x_1,\omega_1)$ and $(x_2,\omega_2)$. By the induction hypothesis, $(x_1\cdots x_n)\dd\omega_1,\ldots,\omega_n;y$ is symmetric in the $n-1$ variables $(x_2,\omega_2),\ldots,(x_n,\omega_n)$. So we get the announced invariance. This completes the proof.
\end{proof}
\ignore{
\begin{defn}
Let $\Omega$ be a commutative semigroup and let $L$ be a pre-Lie family algebra. For $x,y_1,\ldots, y_n\in L$ and $\omega\in\Omega$, define maps $\rhd_\omega:L\ot S^n(L)\ra S^n(L)$ in the following way
\begin{equation}
x\rhd_\omega 1:=0\,\text{ and }\, x\rhd_\omega (y_1\cdots y_n):=\sum_{1\leq i\leq n}y_1\cdots(x\rhd_\omega y_i)\cdots y_n.
\mlabel{eq:mult}
\end{equation}
\end{defn}
}
\begin{defn}\mlabel{defn:bplus}
Let $X$ be a set and let $\Omega$ be a commutative semigroup. Let $x\in X, T_1\cdots T_n\in\calt,$ $\omega_1,\ldots,\omega_n\in\Omega.$ Denote by
$$
B^{+}_{x,\omega_1,\ldots,\omega_n}(T_1\cdots T_n)=\treeo{
\oo`-90`x@
\draw (o)
\eee-1,1`l`T_1@
(o)--(l)\xxx`\omega_1@
;
\draw (o)
\eee1,1`r`T_n@
(o)--(r)\xxx swap`\omega_n@
;
\node at (0,1) {$\dots$};
}$$
the $\Omega$-typed $X$-decorated tree obtained by grafting $T_1\cdots T_n$ on a common root decorated by $x$, the edge between this root and the root of $T_i$ being of type $\omega_i$ for any $i$. This defines maps
$$B^{+}_{x,\omega_1,\ldots,\omega_n}:T^n\big((\calt\bigr)\ra \calt.$$
\end{defn}

\begin{lemma}\mlabel{lem:bplus}
Let $X$ be a set and let $\Omega$ be a commutative semigroup.
For any $x\in X$ and $\omega_1, \cdots,\omega_n\in\Omega$, we have 
$$B^{+}_{x,\omega_1,\ldots,\omega_n}(T_1\cdots T_n)=(T_1\cdots T_n)
 \rhd_{\omega_1,\ldots,\omega_n}\bullet_x.$$
\end{lemma}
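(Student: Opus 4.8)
The plan is to proceed by induction on the number $n$ of trees in the monomial $T_1\cdots T_n$, working in the pre-Lie family algebra $\bfk\calt$ (Proposition~\mref{prop:comm}) and using the recursive definition~\mref{eq:sym} of $(T_1\cdots T_n)\rhd_{\omega_1,\ldots,\omega_n}\bullet_x$ together with the grafting description of $B^{+}_{x,\omega_1,\ldots,\omega_n}$ from Definition~\mref{defn:bplus}.

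First I would treat the base case $n=1$. Since $\bullet_x$ has a single vertex, namely its root $x$, Definition~\mref{defn:pre} gives $T_1\rhd_{\omega_1}\bullet_x=T_1\rjt{\omega_1}{x}\bullet_x$, and grafting $T_1$ at the root $x$ produces precisely the tree $B^{+}_{x,\omega_1}(T_1)$ whose root $x$ carries the single subtree $T_1$ along an edge of type $\omega_1$; no edges lie below the root, so no types are altered.

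For the inductive step, set $B:=B^{+}_{x,\omega_2,\ldots,\omega_n}(T_2\cdots T_n)$, which by the induction hypothesis equals $(T_2\cdots T_n)\rhd_{\omega_2,\ldots,\omega_n}\bullet_x$. Expanding the left-hand side by~\mref{eq:sym}, the first term is $T_1\rhd_{\omega_1}B=\sum_{v\in\ver(B)}T_1\rjt{\omega_1}{v}B$. I would then split the vertex set $\ver(B)=\{x\}\sqcup\bigsqcup_{j=2}^n\ver(T_j)$ and analyse the two kinds of contributions. Grafting at the root $v=x$ contributes exactly $B^{+}_{x,\omega_1,\ldots,\omega_n}(T_1\cdots T_n)$, the target tree. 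For a vertex $v\in\ver(T_j)$, I would write $B=T_j\rjt{\omega_j}{x}U_j$, where $U_j$ is the tree with root $x$ and subtrees $T_2,\ldots,\widehat{T_j},\ldots,T_n$; Lemma~\mref{lemma:two} then yields $T_1\rjt{\omega_1}{v}(T_j\rjt{\omega_j}{x}U_j)=(T_1\rjt{\omega_1}{v}T_j)\rjt{\omega_1\omega_j}{x}U_j$, and summing over $v\in\ver(T_j)$ turns this into $B^{+}_{x,\omega_2,\ldots,\omega_1\omega_j,\ldots,\omega_n}(T_2\cdots(T_1\rhd_{\omega_1}T_j)\cdots T_n)$.

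Finally I would invoke the induction hypothesis once more, this time for the $n-1$ trees $T_2,\ldots,T_{j-1},(T_1\rhd_{\omega_1}T_j),T_{j+1},\ldots,T_n$ (using multilinearity of $\rhd$ and of $B^{+}$, since $T_1\rhd_{\omega_1}T_j$ is a sum of trees), to identify each interior contribution with the $j$-th summand $(T_2\cdots(T_1\rhd_{\omega_1}T_j)\cdots T_n)\rhd_{\omega_2,\ldots,\omega_1\omega_j,\ldots,\omega_n}\bullet_x$ that is subtracted in~\mref{eq:sym}. These interior contributions therefore cancel exactly against the subtracted sum, leaving only the root term $B^{+}_{x,\omega_1,\ldots,\omega_n}(T_1\cdots T_n)$, as desired. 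The one delicate point is the bookkeeping of edge-types under interior grafting, namely that the edge below the root of $T_j$ is multiplied by $\omega_1$ so as to become $\omega_1\omega_j$; but this is precisely the content of Lemma~\mref{lemma:two} (where commutativity of $\Omega$ is used), so the argument requires no fresh computation.
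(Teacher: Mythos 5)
Your proof is correct and follows essentially the same route as the paper: induction on $n$, expansion of $(T_1\cdots T_n)\rhd_{\omega_1,\ldots,\omega_n}\bullet_x$ via Eq.~(\mref{eq:sym}), splitting the grafting sum $T_1\rhd_{\omega_1}B^{+}_{x,\omega_2,\ldots,\omega_n}(T_2\cdots T_n)$ into the root term and the interior terms, and cancelling the latter against the subtracted sum via the induction hypothesis. Your explicit appeal to Lemma~\mref{lemma:two} for the edge-type bookkeeping is a slightly more careful justification of a step the paper handles pictorially, but the argument is the same.
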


\begin{proof}
Let $F=T_1\cdots T_n.$ We proceed by induction on $n\geq 1$. If $n=1$, then $F=T_1$ and $T_1 \rhd_\omega \bullet_x=\treeo{
\oo`-90`x@
\draw (o)
\eee0,1`l`T_1@
(o)--(l)\xxx swap`\omega@
;
}=B^+_{x,\omega}(T_1)$. Let us assume that the result holds for $n-1$, with $n\geq 2.$ We can write $F=T_1F'$ with length $l(F')=n-1$. Then:
\begin{align*}
F \rhd _{\omega_1,\ldots\omega_n} \bullet_x&=(T_1F')\dd\omega_1,\ldots,\omega_n; \bullet_x\\
&=T_1 \rhd_{\omega_1} (F' \rhd_{\omega_2,\ldots,\omega_n}  \bullet_x)-\sum_{j=2}^n \Bigl((T_2\cdots T_{j-1}(T_1 \rhd_{\omega_1} T_j)\cdots T_n\Bigr) \rhd_{\omega_2,\ldots,\omega_1\omega_j,\ldots,\omega_n}  \bullet_x\quad\text{(by Eq.~(\mref{eq:sym}))}\\
&=T_1 \rhd _{\omega_1} B^+_{x,\omega_1,\ldots,\omega_n}(F')-\sum_{j=2}^n B^+_{x,\omega_2,\ldots,\omega_1\omega_j,\ldots,\omega_n}\Bigl((T_2\cdots T_{j-1}(T_1 \rhd_{\omega_1} T_j)\cdots T_n\Bigr)\\
&\hskip 90mm (\text{by the induction hypothesis})\\
&=T_1 \rhd _{\omega_1} B^+_{x,\omega_1,\ldots,\omega_n}(T_2\cdots T_n)-\sum_{j=2}^n B^+_{x,\omega_2,\ldots,\omega_1\omega_j,\ldots,\omega_n}\Bigl((T_2\cdots T_{j-1}(T_1 \rhd_{\omega_1} T_j)\cdots T_n\Bigr)\\
\ignore{
&=T_1\rhd_\alpha \treeo{
\oo`-90`x@
\draw (o)
\eee0,1`l`F'@
(o)--(l)\xxx`\beta@
;
}
-\treeo{
\oo`-90`x@
\draw (o)
\eee0,1`l`T_1\rhd_\alpha F'@
(o)--(l)\xxx`\alpha\beta@
;
}\\
&=\treeo{
\oo`-90`x@
\draw (o)
\eee0,1`l`T_1\rhd_\alpha F'@
(o)--(l)\xxx`\alpha\beta@
;
}+\treeo{
\oo`-90`x@
\draw (o)
\eee-1,1`l`T_1@
(o)--(l)\xxx`\alpha@
;
\draw (o)
\eee1,1`r`F'@
(o)--(r)\xxx swap`\beta@
;
}
-\treeo{
\oo`-90`x@
\draw (o)
\eee0,1`l`T_1\rhd_\alpha F'@
(o)--(l)\xxx`\alpha\beta@
;
}\\
}
&=\treeo{
\oo`-90`x@
\draw (o)
\eee-1,1`l`T_1@
(o)--(l)\xxx`\omega_1@
;
\draw (o)
\eee1,1`r`F'@
(o)--(r)\xxx swap`\omega_2,\ldots,\omega_n@
;
}\\
&=B^+_{x,\omega_1,\ldots,\omega_n}(T_1F')=B^+_{x,\omega_1,\ldots,\omega_n}(F).
\end{align*}
So the result holds for all $n\geq 0.$
\end{proof}

Let $j: X\ra \calt, x\mapsto \bullet_x$ be the standard embedding map.
\begin{theorem}
Let $X$ be a set and let $\Omega$ be a commutative semigroup. Then $(\calt,(\rhd_\omega)_{\omega\in\Omega})$, together with the map $j$, is the free pre-Lie family algebra on $X$.
\mlabel{thm:free}
\end{theorem}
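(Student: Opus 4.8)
The plan is to verify the universal property of the free object directly. I would show that for any pre-Lie family algebra $(L,(\rhd_\omega)_{\omega\in\Omega})$ and any set map $f\colon X\to L$, there exists a unique morphism of pre-Lie family algebras $\bar f\colon (\bfk\calt,(\rhd_\omega)_{\omega\in\Omega})\to (L,(\rhd_\omega)_{\omega\in\Omega})$ such that $\bar f\circ j=f$. The construction of $\bar f$ proceeds by induction on the number of vertices: set $\bar f(\bullet_x):=f(x)$, and for a general tree written as $T=B^{+}_{x,\omega_1,\ldots,\omega_n}(T_1\cdots T_n)$, define
\begin{equation*}
\bar f(T):=\bigl(\bar f(T_1)\cdots \bar f(T_n)\bigr)\rhd_{\omega_1,\ldots,\omega_n}f(x),
\end{equation*}
using the symmetric multilinear maps $\rhd_{\omega_1,\ldots,\omega_n}$ from Eq.~(\mref{eq:sym}). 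This is well defined precisely because Proposition~\ref{prop:comm}'s companion result guarantees $\rhd_{\omega_1,\ldots,\omega_n}$ is invariant under simultaneous permutation of the pairs $(x_i,\omega_i)$, matching the symmetry in the representation of a tree by its branches at the root (recall the Remark identifying rooted trees up to reordering of subtrees).

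First I would check that $\bar f$ is a morphism, i.e.\ that $\bar f(S\rhd_\omega T)=\bar f(S)\rhd_\omega \bar f(T)$ for all $S,T\in\calt$ and $\omega\in\Omega$. Here the key technical input is Lemma~\ref{lem:bplus}, which expresses the grafting operator $B^{+}$ in $\bfk\calt$ in terms of the abstract operation $\rhd_{\omega_1,\ldots,\omega_n}$; together with Lemmas~\ref{lemma:one} and~\ref{lemma:two} describing how $\rhd_\omega$ interacts with grafting at individual vertices. The strategy is to expand $S\rhd_\omega T=\sum_{v\in\ver(T)}S\rjt{\omega}{v}T$ and to prove, again by induction on $|T|$, that applying $\bar f$ commutes with each elementary grafting. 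The inductive step rests on establishing in $L$ an identity analogous to Eq.~(\mref{eq:sym}): that
\begin{equation*}
x_0\rhd_{\omega_0}\bigl((x_1\cdots x_n)\rhd_{\omega_1,\ldots,\omega_n}y\bigr)
\end{equation*}
decomposes into a grafting of $x_0$ onto the root $y$ plus graftings of $x_0$ onto each of the $x_i$, exactly mirroring how a new vertex can be grafted onto the tree $B^{+}_{x,\omega_1,\ldots,\omega_n}(T_1\cdots T_n)$ either at its root or at a vertex inside one of the branches $T_i$.

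Uniqueness is comparatively routine: any morphism $g$ with $g\circ j=f$ must satisfy $g(\bullet_x)=f(x)$, and since every tree is generated from the $\bullet_x$ by iterated grafting, the morphism property forces $g$ to agree with $\bar f$ on all of $\bfk\calt$ by induction on degree. I would phrase this as: the subalgebra generated by $j(X)$ is all of $\bfk\calt$, which follows because $B^{+}_{x,\omega_1,\ldots,\omega_n}(T_1\cdots T_n)=(T_1\cdots T_n)\rhd_{\omega_1,\ldots,\omega_n}\bullet_x$ by Lemma~\ref{lem:bplus} exhibits every tree as an iterated $\rhd$-expression in the generators.

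I expect the main obstacle to be the verification that $\bar f$ respects $\rhd_\omega$. The difficulty is bookkeeping: a single grafting $S\rhd_\omega T$ sums over all vertices of $T$, and under the recursive definition of $\bar f$ these vertices split into the root of $T$ versus vertices lying in the branches. One must show that the ``type multiplication'' rule (each edge below the grafting vertex gets its type multiplied by $\omega$, as in Definition~\ref{defn:pre}) is faithfully reproduced by the index-multiplication appearing in Eq.~(\mref{eq:sym}) and its decomposition. Making this correspondence precise, and confirming that the commutativity of $\Omega$ is exactly what is needed for the two descriptions to match, is the technical heart of the argument; everything else is a clean induction on the degree of the trees.
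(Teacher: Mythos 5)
Your proposal is correct and follows essentially the same route as the paper: the morphism is defined recursively via $\phi(B^{+}_{x,\omega_1,\ldots,\omega_n}(T_1\cdots T_n)):=(\phi(T_1)\cdots\phi(T_n))\rhd'_{\omega_1,\ldots,\omega_n}a_x$, well-definedness comes from the symmetry of the multilinear maps, and the morphism property is checked by induction on the degree of the target tree by splitting the grafting sum into the root term plus the branch terms and recombining via Eq.~(\mref{eq:sym}) — exactly the paper's argument. The only (welcome) addition is that you spell out the uniqueness step, which the paper leaves implicit.
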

\begin{proof}
 Let $(A,(\rhd'_\omega)_{\omega\in\Omega})$ be a pre-Lie family algebra. Choose a set map $\psi:X\ra A$, and use the shorthand notation $a_x$ for $\psi(x)$.\\
\noindent{\bf Existence:} Define a linear map $\phi:\calt\ra A$ as follows. We define $\phi(T)$ by induction on $|T|\geq 1$. For the initial step $|T|=1,$ we have $T=\bullet_x$ and define $$\phi(\bullet_x):=a_x.$$
For the induction step $|T|\geq 2,$ let $T=B^{+}_{x,\omega_1,\ldots,\omega_n}(T_1\cdots T_n)=\treeo{
\oo`-90`x@
\draw (o)
\eee-1,1`l`T_1@
(o)--(l)\xxx`\omega_1@
;
\draw (o)
\eee1,1`r`T_n@
(o)--(r)\xxx swap`\omega_n@
;
\node at (0,1) {$\dots$};
}$, and define
\begin{align}
\phi\Big(B^{+}_{x,\omega_1,\ldots,\omega_n}(T_1\cdots T_n)\Big)&=\phi\Big(\treeo{
\oo`-90`x@
\draw (o)
\eee-1,1`l`T_1@
(o)--(l)\xxx`\omega_1@
;
\draw (o)
\eee1,1`r`T_n@
(o)--(r)\xxx swap`\omega_n@
;
\node at (0,1) {$\dots$};
}\Big)\nonumber\\
&=\phi\Big((T_1\cdots T_n)\rhd_{\omega_1,\ldots,\omega_n}\bullet_x\Big)\nonumber\\
&:=\Big(\phi(T_1)\cdots\phi(T_n)\Big)\rhd'_{\omega_1,\ldots,\omega_n}a_x.
\mlabel{eq:phi}
\end{align}
Let $T,T'\in\calt$ and $\omega\in\Omega$. We are left to prove that
$$\phi(T \rhd _\omega T')=\phi(T) \rhd' _\omega\phi(T')$$
 by induction on  $|T'|\geq 1.$ For the initial step $|T'|=1$, we have $T'= \bullet_x.$ Then $T \rhd _\omega T'=B^+_{x,\omega}(T),$ so we have
$$\phi(T \rhd _\omega T')=\phi(B^+_{x,\omega}(T))=\phi(T)\rhd'_\omega a_x=\phi(T)\rhd'_\omega\phi(T').$$
For the induction step $|T'|\geq 2,$ let
$$T'=B^+_{x,\omega_1,\ldots,\omega_n}(T'_1\cdots T'_n)=\treeo{
\oo`-90`x@
\draw (o)
\eee-1,1`l`T'_1@
(o)--(l)\xxx`\omega_1@
;
\draw (o)
\eee1,1`r`T'_n@
(o)--(r)\xxx swap`\omega_n@
;
\node at (0,1) {$\dots$};
}.$$
Then
\begin{align*}
T\rhd_\omega T'&=T\rhd_\omega B^+_{x,\omega_1,\ldots,\omega_n}(T'_1\cdots T'_n)\\
&=\treeo[y=1.2cm]{
\oo`-90`x@
\draw (o)
\eee-0.8,1`l`T_1'@
(o)--(l)\xxx swap`\omega_1@
;
\draw (o)
\eee0.8,1`r`T_n'@
(o)--(r)\xxx swap`\omega_n@
;
\node at (0,1) {$\dots$};
\draw (o)
\eee-1.8,1`L`T@
;
\draw[dashed]
(o)--(L)\xxx`\omega@
;
}+\sum_{j=1}^n\treeo[y=1.2cm]{
\oo`-90`x@
\draw (o)
\eee-1.8,1`l`T_1'@
(o)--(l)\xxx`\omega_1@
;
\draw (o)
\eee1.8,1`r`T_n'@
(o)--(r)\xxx swap`\omega_n@
;
\draw (o)
\eee0,1`a`T\rhd_\omega T_j'@
;
\draw[dashed]
(o)--(a)\xxx pos=0.8`\omega\omega_j@
node[pos=0.55]{$\,\dots\,\dots$}
;
}\\
&=B^+_{x,\omega,\omega_1,\ldots,\omega_n}(TT'_1\cdots T'_n)
+\sum_{j=1}^n B^+_{x,\omega_1,\ldots,\omega\omega_j,\ldots,\omega_n}(T'_1\cdots
(T\rhd_\omega T'_j)\cdots T'_n).
\end{align*}
So we have
\begin{align*}
&\phi(T\rhd_\omega T')\\
&=\phi\biggl(B^+_{x,\omega,\omega_1,\ldots,\omega_n}(TT'_1\cdots T'_n)
+\sum_{j=1}^n B^+_{x,\omega_1,\ldots,\omega\omega_j,\ldots,\omega_n}(T'_1\cdots
(T\rhd_\omega T'_j)\cdots T'_n)\biggl)\\
&=\Big(\phi(T)\phi(T'_1)\cdots\phi(T'_n)\Big)\dd'\omega,\omega_1,\ldots,\omega_n;a_x
+\sum^n_{j=1}\Big(\phi(T'_1)\cdots \phi(T\rhd_\omega T'_j)\cdots \phi(T'_n)\Big)\dd'\omega_1,\ldots,\omega\omega_j,\ldots,\omega_n;a_x\\
&\hspace{4cm}\text{(by Eq.~(\mref{eq:phi}))}\\
&=\Big(\phi(T)\phi(T'_1)\cdots\phi(T'_n)\Big)\dd'\omega,\omega_1,\ldots,\omega_n;a_x
+\sum^n_{j=1}\bigg(\phi(T'_1)\cdots \Big(\phi(T)\rhd'_\omega \phi(T'_j)\Big)\cdots \phi(T'_n)\bigg)\dd'\omega_1,\ldots,\omega\omega_j,\ldots,\omega_n;a_x\\
&\hspace{4cm}\text{(by the induction hypothesis)}\\
%
%
&=\phi(T)\rhd'_\omega\bigg(\Big(\phi(T'_1)\cdots\phi(T'_n)\Big)
\rhd'_{\omega_1,\ldots,\omega_n}a_x\bigg)\quad\text{(by Eq.~(\mref{eq:sym}))}\\
&=\phi(T)\rhd'_\omega\phi(T').
\end{align*}
This completes the proof.
\end{proof}
\section{The pre-Lie family operad}
\mlabel{sec:oper}
We generalize the description of the pre-Lie operad in terms of labeled rooted trees by Chapoton and Livernet~\cite{CL} to a description of the pre-Lie family operad in terms of typed labeled rooted trees.
\subsection{The operad of pre-Lie family algebras}
We now describe an operad in the linear species framework. We refer to~\cite{BD, LoVa, MSS, MM} for notations and definitions on operads.
In fact, a pre-Lie family algebra is an algebra over a binary quadratic operad, denoted by $\pl$.\\

\noindent From now on, let $\cal{C}$ be a symmetric tensor category~\cite{Mac}.
\begin{defn}
 A species in the category $\cal{C}$ is a contravariant functor $E$ from the category of finite sets $E_{fin}$ with bijections to
$\cal{C}$. Thus, a species $E$ provides an object $E_A$ for any finite set $A$ and an isomorphism $E_\phi:E_B\ra E_A$ for any bijection $\phi:A\ra B.$
\mlabel{defn:spe}
\end{defn}
\begin{defn}
A morphism of species between $F$ and $G$ is a natural transformation $\psi:F\ra G$.
So for any finite sets $A$ and $B$ of the same cardinal and any bijection $\phi$ from $A$ to $B$, the following diagram commutes:
$$
\xymatrix{
  F_B \ar[d]_{\psi_B} \ar[r]^{F_\phi}
                & F_A \ar[d]^{\psi_A} \\
  G_B
                & G_A
                \ar@{<-}[l]^-{G_\phi}            }
$$
\end{defn}

\noindent Recall \cite[Paragraph 3.2.4]{MM} that the monoidal structure $\circ$ on species is given by
\begin{equation}
(\mathcal P\circ \mathcal Q)_A:=\bigoplus_{\pi\hbox{ \tiny partition of }A}\mathcal P_\pi\otimes\bigotimes_{B\in\pi}\mathcal Q_B.
\end{equation}
\begin{defn}
An operad $\cal{P}=(\cal{P}, \gamma,\eta)$ is a species $A\mapsto \cal P_A$ endowed with a monoid structure, i.e. an associative composition map $\gamma:\cal{P}\circ\cal{P}\ra\cal{P}$ and a unit map $\eta:{\bf I}\ra \cal{P}$ where ${\bf I}$ is the species defined by ${\bf I}_A=0$ for $|A|\neq 1$ and ${\bf I}_{\{*\}}=1_{\mathcal C}$.
\end{defn}
Partial compositions give an equivalent and simpler way to define operads, because it reduces the operad multiple substitution to binary operations that frequently are easier to define.

\begin{defn}
Let $\cal P$ be an operad and let $\mu\in\calp(A), \nu\in\calp(B)$ be two operations, where $A$ and $B$ are two finite sets.
Define the partial compositions
\begin{align*}
\circ_a: \calp(A)\ot\calp(B) & \ra\calp(A\sqcup B\backslash\{a\}),\,\text{ for }\, a\in A.\\
\mu\circ_a\nu &:= \gamma\bigl(\mu;(\alpha_x)_{x\in A}\bigr)
\end{align*}
with $\alpha_a=\nu$ and $\alpha_x=\hbox{id}$ for $x\neq a$. There are two different cases for two-stage partial compositions, depending on the relative positions of the two insertions. Associativity of the composition in an operad leads to two associativity axioms for the partial compositions, one for each case (sequential or parallel):
\begin{equation*}
\left\{
\begin{array}{ll}
\text{(I)}\quad (\lambda \circ_a \mu)\circ_b\nu=\lambda \circ_a(\mu\circ_b\nu), & \text{ for } a\in A, b\in B; \\
\text{(II)}\quad (\lambda\circ_a \mu)\circ_{a'}\nu=(\lambda\circ_{a'} \nu)\circ_a\mu, & \text{ for }  a,a'\in A,
\end{array}
\right .
\end{equation*}
for any $\lambda\in\calp(A),\mu\in\calp(B),\nu\in\calp(C).$ Relation (I) is called the {\bf sequential composition} axiom and relation (II) is called the {\bf parallel composition} axiom. The unit element $\id\in\calp(1)$ satisfies
$$\text{(III)}\quad \id\circ_{\{*\}}\mu=\mu\hbox{ and }\mu\circ_b\id=\mu$$
for any $b\in B$. Conversely, given a family of partial compositions verifying the three axioms above, we can recover the global composition by choosing any enumeration $b_1,\ldots,b_n$ of the finite set $B$, and by setting
$$\gamma(\mu;\alpha_1,\ldots,\alpha_n):=\Bigl(\cdots\bigl((\mu\circ_{b_1}\alpha_1)\circ_{b_2}\alpha_2\bigr)\cdots\Bigr)\circ_{b_n}\alpha_n.$$
This does not depend on the choice of the enumeration by virtue of the iterated Axiom (II).
\end{defn}

 Denote by $|A|$ the cardinality of the finite set $A$. Let $\ff$ be the free operad generated by the regular representation of $S_2$ on the species $E$ defined as follows:
\begin{equation*}
E_A=
\left\{
\begin{array}{ll}
0, & \text{ if }|A|\neq 2;\\
{\mathbf k}\Omega\ot\bfk S_2, & \text{ if }  |A|=2.
\end{array}
\right .
\end{equation*}

We choose a basis $(\mu_\omega)_{\omega\in\Omega}$ of ${\mathbf k}\Omega$. The space $(\ff)_A$ is the linear span of planar binary trees where each internal vertex $v$ is decorated by an element of $E_{\hbox{\tiny In}(v)}$ and leaves are labelled by the elements of the set $A$. Here $\hbox{In}(v)$ stands for the set of incoming edges of vertex $v$.
A basis of $\ff(A)$, as a vector space, is given by $|A|-2$ formal partial compositions of the binary elements $\rhd_\omega$ and $\tau\rhd_\omega$.
Let $R$ be the $S_3$-submodule of $\ff(A)$ generated by the relations
\begin{equation}\label{relations-operad}
r=\rhd_\alpha\circ_b\rhd_\beta-\rhd_{\alpha\beta}\circ_a\rhd_\alpha
-\tau_{12}(\rhd_\beta\circ_b\rhd_\alpha-\rhd_{\beta\alpha}\circ_a\rhd_\beta),
\end{equation}
Choose an enumeration $\{a,b\}$ of the set $A$, and an enumeration $\{1,2\}$ of a second disjoint copy of the set $A$. Note that $\rhd_\alpha\circ_b\rhd_\beta$ and $\rhd_\beta\circ_b\rhd_\alpha$ belong to $\pl_{\{a,1,2\}}$ whereas $\rhd_{\alpha\beta}\circ_a\rhd_\beta$ and  $\rhd_{\beta\alpha}\circ_a\rhd_\alpha$ belong to $\pl_{\{1,2,b\}}$. We identify both three-element sets by means of the bijection
\begin{equation*}
\begin{pmatrix}
a & 1 & 2 \\
1 & 2 & b
\end{pmatrix}
\end{equation*}
in order to make Equation \eqref{relations-operad} consistent. Then $\pl=\ff/(R),$ where $(R)$ denotes the operadic ideal of $\ff$ generated by $R$.
\subsection{The operad of typed labeled rooted trees}
Let $\tm(A)$ be the set of typed labeled rooted trees, whose leaves are labelled by the elements of set $A$ and the edges are decorated by elements of $\Omega.$ We denote by $\tr(A)$ the free \bfk-vector space generated by the $\Omega$-typed $A$-labeled rooted trees $\tm(A)$. We can endow $\tr$ with a linear operad structure as follows. Recall that $\hbox{In}(v)$ stands for the set of incoming edges at the vertex $v$ of $T$.

 \begin{defn}
 Let $A, B$ be two sets. We define the composition of $T$ and $U$ along the vertex $v$ of $T$ by $\circ_v:\tr(A)\ot\tr(B)\ra\tr(A\sqcup B\backslash\{v\}),$ for $T\in\tm(A)$ and $U\in\tm(B)$
\begin{equation}
T\circ_v U:=\sum_{f:\hbox{\tiny In}(v)\ra\ver{(U)}}T\circ^f_v U,
\mlabel{eq:comp}
\end{equation}
where $T\circ^f_v U$ is the typed labeled rooted tree of $\tm(A\sqcup B\backslash\{v\})$ obtained by
\begin{itemize}
\item replacing the vertex $v$ of $T$ by the tree $U$,
\item connecting each edge $a$ in $\hbox{In}(v)$ at the vertex $f(a)$ of $U$,
\item multiplying by $\omega_a$ the type of any edge below the vertex $f(a)$, where $\omega_a$ is the type of the edge $a$.
\end{itemize}
\mlabel{defn:comp}
\end{defn}

\noindent For better understanding, we give an example.
\begin{exam}
Let $\Omega$ be a commutative semigroup.
Let
$$
T=\treeo{
\oo`-90`x_7@
\draw (o)
\zzz-1,1`l...`\alpha_3...[inner sep=0pt]-135`\tred{v}@
\zzz-0.5,1`ll...`\alpha_1...90`x_1@
;
\draw (l)
\zzz0.5,1`lr...swap`\alpha_2...90`x_2@
;
\draw (o)
\zzz1,1`r...swap`\alpha_4...[inner sep=0pt]-45`x_6@
\zzz0.5,1`rr...swap`\alpha_6...90`x_4@
;
\draw (r)
\zzz-0.5,1`rl...`\alpha_5...90`x_3@
;
},\,\text{ and }\,
U=\treeo{
\oo`-90`y_7@
\draw (o)
\zzz0,1`oo...`\beta_7...[inner sep=0pt]-45`y_3@
\zzz-1,1`l...`\beta_2...180`y_2@
\zzz0.5,1`lr...`\beta_1...90`y_1@
;
\draw (oo)
\zzz1,1`r...swap`\beta_3...[inner sep=0pt]-45`{v'}@
\zzz0.8,1`rr...swap`\beta_6...90`y_6@
;
\draw (r)
\zzz-0.8,1`rl...`\beta_4...90`y_4@
;
\draw (r)
\zzz0,1`ra...swap,pos=0.75`\beta_5...90`y_5@
;
}.$$

We are ready to compute $T\circ_v U$, first we replace the vertex $v$ of $T$ with the tree $U$, then we graft the edges which contain vertices $x_1$ and $x_2$ of $T$ on the vertices of $U$, there are many cases. Here we just write one case, we graft the two edges of $T$ we mentioned above on the left vertex $y_1$ and right vertex $y_6$ of $U$ respectively, that is, $f(\treeoo{
\draw (o)
\zzz0,0.6`l...swap`\alpha_1...90`x_1@
;})=y_1$ and $f(\treeoo{
\draw (o)
\zzz0,0.6`l...swap`\alpha_2...90`x_2@
;})=y_6$. Denote by $T\circ_{v,lr}U$ this particular component of the composition. Then we have
$$
T\circ_{v,lr} U=
\treeo{
\oo`-90`x_7@
\draw (o)
\zzz-1.5,1`l,red...`\alpha_3...180`\tred{y_7}@
;
\draw (o)
\zzz1.5,1`r...swap`\alpha_4...[inner sep=0pt]-45`x_6@
\zzz0.5,1`rr...swap`\alpha_6...90`x_4@
;
\draw (r)
\zzz-0.5,1`rl...`\alpha_5...90`x_3@
;
\begin{scope}[red]
\draw (l)
\zzz0,1`uo...`\tblk{\alpha_1}\beta_7\tblk{\alpha_2}...[inner sep=0pt]-45`y_3@
\zzz-1,1`ul...`\tblk{\alpha_1}\beta_2...180`y_2@
\zzz0.5,1`ulr...`\tblk{\alpha_1}\beta_1...[inner sep=0pt]-45`y_1@
;
\draw (uo)
\zzz1,1`ur...swap`\beta_3\tblk{\alpha_2}...[inner sep=0pt]-45`{v'}@
\zzz0.8,1`urr...swap`\beta_6\tblk{\alpha_2}...[inner sep=0pt]-45`y_6@
;
\draw (ur)
\zzz-0.8,1`url...`\beta_4...90`y_4@
;
\draw (ur)
\zzz0,1`ura...swap,pos=0.75`\beta_5...90`y_5@
;
\end{scope}
\draw (ulr)
\zzz-0.5,1`ll...`\alpha_1...90`x_1@
;
\draw (urr)
\zzz0.5,1`lr...swap`\alpha_2...90`x_2@
;
}
.$$
\mlabel{exam:ex}
\end{exam}

\begin{prop}
The species $\tr$ together with the partial compositions $\circ_v$ defined by Eq.~(\mref{eq:comp}) is an operad.
\mlabel{prop:operad}
\end{prop}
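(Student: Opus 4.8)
The plan is to verify the three partial-composition axioms (I), (II), (III) for the operations $\circ_v$ of Eq.~(\mref{eq:comp}), after first checking that $T\circ_v U$ genuinely lands in $\tr(A\sqcup B\backslash\{v\})$. Well-definedness is immediate from the construction in Definition~\mref{defn:comp}: replacing the vertex $v$ of $T\in\tm(A)$ by $U\in\tm(B)$ and reattaching each edge of $\hbox{In}(v)$ to some vertex of $U$ yields a rooted tree whose vertices are labelled by $A\sqcup B\backslash\{v\}$, and the prescribed multiplications keep every edge typed by an element of $\Omega$. Since the recipe only refers to the incidence structure and the labels through $\hbox{In}(v)$, it commutes with relabelling bijections, so each $\circ_v$ is a morphism of species, as required for an operad structure. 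The unit axiom (III) I would dispose of first: the unit is the one-vertex tree $\bullet_{*}$, so in $T\circ_v\bullet_{*}$ the set $\hbox{In}(v)$ is reattached to the single vertex of $\bullet_{*}$, which carries no internal edge, whence no type is altered and $T$ is recovered; dually $\bullet_{*}\circ_{*}U=U$ because $\hbox{In}(*)=\varnothing$.

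For the sequential axiom (I), take $\lambda\in\tr(A)$, $\mu\in\tr(B)$, $\nu\in\tr(C)$, $a\in A$, $b\in B$, and expand both $(\lambda\circ_a\mu)\circ_b\nu$ and $\lambda\circ_a(\mu\circ_b\nu)$ via Eq.~(\mref{eq:comp}) into double sums indexed by the attachment maps on $\hbox{In}(a)$ and $\hbox{In}(b)$. The strategy is to exhibit a type-preserving bijection between the two indexing sets and to check that matched summands are literally the same typed labelled tree. The one delicate point is the type bookkeeping: an edge lying below the image of an edge of $\hbox{In}(b)$ gets its type multiplied once when $\nu$ is grafted into $\mu$, and then that same edge is multiplied again by the relevant types of $\hbox{In}(a)$ when $\mu$ is inserted into $\lambda$; associativity of $\Omega$ guarantees that this compounded product does not depend on the grouping, which is exactly the two-step phenomenon already isolated in Lemma~\mref{lemma:two} (Eq.~(\mref{eq:two})).

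For the parallel axiom (II), which is where commutativity of $\Omega$ becomes indispensable, I would compare $(\lambda\circ_a\mu)\circ_{a'}\nu$ with $(\lambda\circ_{a'}\nu)\circ_a\mu$ for distinct $a,a'\in A$. Because $a\neq a'$, the two insertions are made at distinct vertices of $\lambda$ and the two double sums factor through the same product index set, so again the comparison is summand-by-summand. An edge of $\lambda$ lying below both $a$ and $a'$ receives a multiplicative contribution from $\hbox{In}(a)$ and one from $\hbox{In}(a')$, and the two orders of performing the insertions accumulate these contributions in opposite order; they coincide precisely because $\Omega$ is commutative. This is the operadic incarnation of Lemma~\mref{lemma:one} (Eq.~(\mref{eq:one})), and I expect this edge-by-edge tracking of the accumulated type to be the main obstacle of the whole argument, with associativity handling axiom (I) and commutativity handling axiom (II). Once the three axioms hold, the global composition $\gamma$ is recovered from the $\circ_v$ independently of any chosen enumeration, and $\tr$ is an operad.
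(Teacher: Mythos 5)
Your plan follows the same route as the paper's proof: expand both sides of each associativity axiom into double sums over the attachment maps, exhibit an explicit bijection between the two index sets (for the sequential case, between pairs $(f,g)$ and $(\tilde f,\tilde g)$), and then check edge by edge that matched summands carry the same types. One point in your justification is off, however, and you would hit it when executing the plan: you claim that associativity of $\Omega$ alone handles the sequential axiom (I), with commutativity reserved for the parallel axiom (II). That is the right picture for the single-edge graftings of Lemma~\mref{lemma:two} versus Lemma~\mref{lemma:one}, but the operadic composition of Eq.~(\mref{eq:comp}) is not a single grafting: the several edges of $\hbox{In}(v)$ may be reattached at \emph{different} vertices of $U$, so for an edge $e$ of $W$ the multiplier on one side groups each $\omega_{a'}$ (for $a'\in\hbox{In}(v')$ lying above $e$) together with the $\omega_a$'s of the $T$-edges attached above $a'$, while on the other side it collects all the $U$-contributions first and all the $T$-contributions afterwards. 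The two products contain the same factors but interleaved in different orders, so commutativity of $\Omega$ is genuinely needed in axiom (I) as well — the paper's proof says exactly this ("This amounts to the same, due to commutativity of the semigroup $\Omega$"). Since $\Omega$ is commutative by hypothesis this does not invalidate your argument, but the appeal to associativity alone would not close the sequential case. The rest (well-definedness as a morphism of species, the unit axiom with the one-vertex tree, recovering $\gamma$ from the partial compositions) is fine and is either implicit or omitted as routine in the paper.
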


\begin{proof}
We adapt the proof from \cite[Theorem 10]{S14}, where the associativity is proved in the slightly different context of a "current-preserving" version of the pre-Lie operad. Let $T\in\tm(A), U\in\tm(B)$ and $W\in\tm(C)$ where $A,B,C$ are three finite sets. First, we prove sequential associativity:
let $v\in T$ and $v'\in U.$ Then we have
 \begin{align*}
(T\circ_v U)\circ_{v'}W&=\sum_{f:\hbox{\tiny In}(v)\ra \ver{(U)}}
(T\circ^f_v U)\circ_{v'}W\\
&=\sum_{f:\hbox{\tiny In}(v)\ra \ver{(U)}}\sum_{g:\widetilde{\hbox{\tiny In}}(v')\ra\ver{(W)}}(T\circ^f_v U)\circ^g_{v'}W,
\end{align*}
where $\widetilde{\hbox{In}}(v')$ stands for the set of incoming edges of $v'$ inside the tree $T\circ_v^fU$. Similarly we have
 \begin{align*}
 T\circ_v(U\circ_{v'}W)&=\sum_{\tilde{g}:\hbox{\tiny In}(v')\ra \ver{(W)}}T\circ_v(U\circ^{\tilde{g}}_{v'}W)\\
 &=\sum_{\tilde{f}:\hbox{\tiny In}(v)\ra \ver{(U\circ^{\tilde{g}}_{v'}W)}}\sum_{\tilde{g}:\hbox{\tiny In}(v')\ra \ver{(W)}}T\circ^{\tilde{f}}_v(U\circ^{\tilde{g}}_{v'}W).
 \end{align*}
 In order to show $(T\circ_v U)\circ_{v'}W=T\circ_v(U\circ_{v'}W),$ we have to prove that there exists a natural bijection $(f,g)\mapsto (\tilde{f},\tilde{g})$ such that
 \begin{equation}
 (T\circ^f_v U)\circ^g_{v'}W=T\circ^{\tilde{f}}_v(U\circ^{\tilde{g}}_{v'}W).
 \mlabel{eq:bi}
 \end{equation}
 Let $f:\hbox{In}(v)\ra\ver{(U)}$ and $g:\widetilde{\hbox{In}}(v')\ra\ver{(W)}$ be two maps. We look for $\tilde{g}:\hbox{ In}(v')\ra\ver{(W)}$ and $\tilde{f}:\hbox{ In}(v)\ra\ver{(U\circ^{\tilde{g}}_{v'}W)}=\ver{(U)}\sqcup \ver{(W)}\backslash\{v'\}$
such that $(T\circ^f_v U)\circ^g_{v'}W=T\circ^{\tilde{f}}_v(U\circ^{\tilde{g}}_{v'}W)$ holds.

Let $a$ be an edge of $U$ arriving at $v'$, thus $a$ is an edge of $T\circ^f_v U$ arriving at $v'$. We get $\tilde{g}(a)=g(a)$, hence $\tilde g$ is the restriction of $g$ to $\hbox{In}(v')$. Similarly we define $\tilde{f}$ in a unique way:
\begin{align*}
\tilde{f}:\hbox{In}(v)&\ra\ver(U\circ^{\tilde{g}}_{v'}W)=\ver{(U)}\sqcup \ver{(W)}\backslash\{v'\}\\
a&\mapsto \tilde{f}(a)=
\left\{
\begin{array}{ll}
f(a), & \text{ if } f(a)\neq v';\\
g(a), & \text{ if } f(a)=v'.
\end{array}
\right .
\end{align*}
Conversely, we assume that we have the pair $(\tilde{f},\tilde{g})$ and look for the pair $(f,g)$ such that Eq.~(\mref{eq:bi}) holds. We have $\tilde{f}:\hbox{In}(v)\ra\ver{(U)}\sqcup \ver{(W)}\backslash\{v'\}$ and $\tilde{g}:\hbox{In}(v')\ra\ver{(W)}.$ We can define
\begin{align*}
f:\hbox{In}(v)&\ra\ver{(U)}\\
a&\mapsto f(a)=\left\{
\begin{array}{ll}
\tilde{f}(a), & \text{ if } \tilde{f}(a)\notin\ver{(W)};\\
v', & \text{ if } \tilde{f}(a)\in \ver{(W)}.
\end{array}
\right .
\end{align*}
and
\begin{align*}
g:\widetilde{\hbox{In}}(v')&\ra\ver{(W)}\\
a&\mapsto g(a)=\left\{
\begin{array}{ll}
\tilde{g}(a), & \text{ if } a \text{ is an edge of }U;\\
\tilde{f}(a), & \text{ if } a \text{ is an edge of }T.
\end{array}
\right .
\end{align*}
The two subjacent trees of $(T\circ^f_v U)\circ^g_{v'}W$ and $T\circ^{\tilde{f}}_v(U\circ^{\tilde{g}}_{v'}W)$ are the same. In both cases an edge in $U$ has its type multiplied by the types of the edges of $T$ arriving above it along the plugging map $f$. For the left-hand side, an edge in $W$ has its type multiplied by the types of the edges of $T\circ^f_v U$ arriving above it along the plugging map $g$. For the right-hand side, an edge in $W$ has its type multiplied by the types of the edges of $U$ arriving above it along the plugging map $\tilde g$, and also mutiplied by  the types of the edges of $T$ arriving above it along the plugging map $\tilde f$. This amounts to the same, due to commutativity of the semigroup $\Omega$, which proves \eqref{eq:bi}.\\

\noindent Second, we prove the parallel  associativity: let $v$ and $v'$ be two disjoint vertices of $T$, then
\begin{align*}
(T\circ_vU)\circ_{v'}W&=\sum_{f:\hbox{\tiny In}(v)\ra\ver{(U)}}(T\circ^f_v U)\circ_{v'}W\\
&=\sum_{f:\hbox{\tiny In}(v)\ra\ver{(U)}}\sum_{g:\hbox{\tiny In}(v')\ra\ver{(W)}}
(T\circ^f_v U)\circ^g_{v'}W.
\end{align*}
Similarly, we have
\begin{align*}
(T\circ_{v'}W)\circ_v U&=\sum_{g:\hbox{\tiny In}(v')\ra\ver{(W)}}(T\circ^{g}_{v'}W)\circ_v U\\
&=\sum_{g:\hbox{\tiny In}(v')\ra\ver{(W)}}\sum_{f:\hbox{\tiny In}(v)\ra\ver{(U)}}
(T\circ^{g}_{v'}W)
\circ^{f}_v U.
\end{align*}
The equality $(T\circ_v U)\circ_{v'}W=(T\circ_{v'} W)\circ_{v} U$ comes from the fact that both sides have the same subjacent tree, which are identically typed by virtue of the commutativity of $\Omega$.
\end{proof}

\noindent For a more intuitive understanding of Proposition~\mref{prop:operad}, we give the following example.
\begin{exam}
Let $\Omega$ be a commutative semigroup. Let
\[
T=\treeo{\oo`-90`x_7@
\draw (o)
\zzz-1,1`l...`\alpha_3...[inner sep=0pt]-135`\tred{v}@
\zzz-0.5,1`ll...`\alpha_1...90`x_1@
;
\draw (l)
\zzz0.5,1`lr...swap`\alpha_2...90`x_2@
;
\draw (o)
\zzz1,1`r...swap`\alpha_4...[inner sep=0pt]-45`x_6@
\zzz0.5,1`rr...swap`\alpha_6...90`x_4@
;
\draw (r)
\zzz-0.5,1`rl...`\alpha_5...90`x_3@
;
},\quad
U=\treeo{\oo`-90`y_7@
\draw (o)
\zzz0,1`oo...`\beta_7...[inner sep=0pt]-45`y_3@
\zzz-1,1`l...`\beta_2...180`y_2@
\zzz0.5,1`lr...`\beta_1...90`y_1@
;
\draw (oo)
\zzz1,1`r...swap`\beta_3...[inner sep=0pt]-45`\tcyan{v'}@
\zzz0.8,1`rr...swap`\beta_6...90`y_6@
;
\draw (r)
\zzz-0.8,1`rl...`\beta_4...90`y_4@
;
\draw (r)
\zzz0,1`ra...swap,pos=0.75`\beta_5...90`y_5@
;
}\,\text{ and }\,
W=\treeo{\oo`-90`z_4@
\draw (o)
\zzz0,1`oo...`\gamma_3...[inner sep=0pt]-45`z_3@
\zzz-1,1`l...`\gamma_1...90`z_1@
;
\draw (oo)
\zzz1,1`r...swap`\gamma_2...90`z_2@
;
},
\]
Here we just illustrate the sequential associativity: $(T\circ_v U)\circ_{v'}W=T\circ_v(U\circ_{v'}W)$ as the parallel associativity is simpler to understand.
But there are many cases when we compute the composition, here we just illustrate a particular case. Let $f:\hbox{In}(v)\ra\ver{(U)}$ and $g:\widetilde{\hbox{In}}(v')\ra\ver{(W)}$. Choose
$$f(\treeoo{
\draw (o)
\zzz0,0.6`l...swap`\alpha_1...90`x_1@
;})=y_1,f(\treeoo{
\draw (o)
\zzz0,0.6`l...swap`\alpha_2...90`x_2@
;})=y_6\,\text{ and }\,g(\treeoo{
\draw (o)
\zzz0,0.6`l...swap`\beta_4...90`y_4@
;})=z_1, g(\treeoo{
\draw (o)
\zzz0,0.6`l...swap`\beta_5...90`y_5@
;})=z_3, g(\treeoo{
\draw (o)
\zzz0,0.6`l...swap`\beta_6...90`y_6@
;})=z_2.$$
We divide into two steps of the left hand side: the first step we replace the vertex $v$ of $T$ with $U$, we denote by $T\circ^f_{v,1}U$ the particular component of the composition, this step we can refer to Example~\mref{exam:ex}. So we have
\[
T\circ^f_{v,1}U=
\treeo{\oo`-90`x_7@
\draw (o)
\zzz-1.5,1`l,red...`\alpha_3...180`\tred{y_7}@
;
\draw (o)
\zzz1.5,1`r...swap`\alpha_4...[inner sep=0pt]-45`x_6@
\zzz0.5,1`rr...swap`\alpha_6...90`x_4@
;
\draw (r)
\zzz-0.5,1`rl...`\alpha_5...90`x_3@
;
\begin{scope}[red]
\draw (l)
\zzz0,1`uo...`\tblk{\alpha_1}\beta_7\tblk{\alpha_2}...[inner sep=0pt]-45`y_3@
\zzz-1,1`ul...`\tblk{\alpha_1}\beta_2...180`y_2@
\zzz0.5,1`ulr...`\tblk{\alpha_1}\beta_1...[inner sep=0pt]-45`y_1@
;
\draw (uo)
\zzz1,1`ur...swap`\beta_3\tblk{\alpha_2}...[inner sep=0pt]-45`\tcyan{v'}@
\zzz0.8,1`urr...swap`\beta_6\tblk{\alpha_2}...[inner sep=0pt]-45`y_6@
;
\draw (ur)
\zzz-0.8,1`url...`\beta_4...90`y_4@
;
\draw (ur)
\zzz0,1`ura...swap,pos=0.75`\beta_5...90`y_5@
;
\end{scope}
\draw (ulr)
\zzz-0.5,1`ll...`\alpha_1...90`x_1@
;
\draw (urr)
\zzz0.5,1`lr...swap`\alpha_2...90`x_2@
;
}.
\]
The second step we replace the vertex $v'$ of $T\circ^f_{v,1}U$ with $W$ and denote by $(T\circ^f_{v,1}U)\circ^g_{v',2}W$ the particular component of the composition, where the map $g$ is given above. Then we have
\[
(T\circ^f_{v,1}U)\circ^g_{v',2}W=
\treeo{\oo`-90`x_7@
\draw (o)
\zzz-1.5,1`l,red...`\alpha_3...180`\tred{y_7}@
;
\draw (o)
\zzz1.5,1`r...swap`\alpha_4...[inner sep=0pt]-45`x_6@
\zzz0.5,1`rr...swap`\alpha_6...90`x_4@
;
\draw (r)
\zzz-0.5,1`rl...`\alpha_5...90`x_3@
;
\draw[red] (l)
\zzz0,1`uo...`\tblk{\alpha_1}\beta_7\tblk{\alpha_2}...[inner sep=0pt]-45`y_3@
\zzz-1.5,1`ul...`\tblk{\alpha_1}\beta_2...180`y_2@
\zzz0.5,1`ulr...`\tblk{\alpha_1}\beta_1...0`y_1@
;
\draw[red] (uo)
\zzz1.5,1`ur,cyan...swap`\beta_3\tblk{\alpha_2}...0`\tcyan{z_4}@
;
\draw (ulr)
\zzz-0.5,1`ll...`\tblk{\alpha_1}...90`x_1@
;
\draw[cyan] (ur)
\zzz0,1`vo...swap`\tred{\beta_4\beta_5}\gamma_3\tred{\beta_6\tblk{\alpha_2}}...[inner sep=0pt]-135`z_3@
\zzz-1,1`vl...`\tred{\beta_4}\gamma_1...180`z_1@
;
\draw[cyan] (vo)
\zzz1,1`vr...swap`\gamma_2\tred{\beta_6\tblk{\alpha_2}}...0`z_2@
;
\draw[red] (vl)
\zzz0,1`vla...`\beta_4...90`y_4@
;
\draw[red] (vo)
\zzz0,2`voa...`\beta_5...90`y_5@
;
\draw[red] (vr)
\zzz0,1`vra...swap`\beta_6\tblk{\alpha_2}...135`y_6@
;
\draw (vra)
\zzz0.5,1`vrar...swap`\tblk{\alpha_2}...90`x_2@
;
}.
\]
 Let $\tilde{g}:\hbox{ In}(v')\ra\ver{(W)}$ and $\tilde{f}:\hbox{ In}(v)\ra\ver{(U\circ^{\tilde{g}}_{v'}W)}=\ver{(U)}\sqcup \ver{(W)}\backslash\{v'\}$.
 Choose $$\tilde{g}(\treeoo{
\draw (o)
\zzz0,0.6`l...swap`\beta_4...90`y_4@
;})=z_1, \tilde{g}(\treeoo{
\draw (o)
\zzz0,0.6`l...swap`\beta_5...90`y_5@
;})=z_3, \tilde{g}(\treeoo{
\draw (o)
\zzz0,0.6`l...swap`\beta_6...90`y_6@
;})=z_2\,\text{ and }\,\tilde{f}(\treeoo{
\draw (o)
\zzz0,0.6`l...swap`\alpha_1...90`x_1@
;})=y_1,\tilde{f}(\treeoo{
\draw (o)
\zzz0,0.6`l...swap`\alpha_2...90`x_2@
;})=y_6.$$
Similarly, we also divide into two steps of the right hand side. The first step we replace the vertex $v'$ of $U$ with $W$ and denote by $U\circ^{\tilde{g}}_{v',1}W$ the particular composition. So we have
\[
U\circ^{\tilde{g}}_{v',1}W=
\treeo[red]{\oo`-90`y_7@
\draw (o)
\zzz0,1`oo...`\beta_7...[inner sep=0pt]-45`y_3@
\zzz-1.5,1`l...`\beta_2...180`y_2@
\zzz0.5,1`lr...`\beta_1...90`y_1@
;
\draw (oo)
\zzz1.5,1`r,cyan...swap`\beta_3...0`\tcyan{z_4}@
;
\draw[cyan] (r)
\zzz0,1`vo...swap`\gamma_3\tred{\beta_4\beta_5\beta_6}...[inner sep=0pt]-135`z_3@
\zzz-1,1`vl...`\tred{\beta_4}\gamma_1...180`z_1@
;
\draw[cyan] (vo)
\zzz1,1`vr...swap`\gamma_2\tred{\beta_6}...0`z_2@
;
\draw (vl)
\zzz0,1`vla...`\beta_4...90`y_4@
;
\draw (vo)
\zzz0,2`voa...`\beta_5...90`y_5@
;
\draw (vr)
\zzz0,1`vra...`\beta_6...90`y_6@
;
}.
\]
The second step we replace the vertex $v$ of $T$ with $U\circ^{\tilde{g}}_{v',1}W$ and  denote by $T\circ^{\tilde{f}}_{v,2}(U\circ^{\tilde{g}}_{v',1}W)$ the particular component of the composition, where $\tilde{f}$ is given above. Then we have
\[
T\circ^{\tilde{f}}_{v,2}(U\circ^{\tilde{g}}_{v',1}W)=
\treeo{\oo`-90`x_7@
\draw (o)
\zzz-1.5,1`l,red...`\alpha_3...180`\tred{y_7}@
;
\draw (o)
\zzz1.5,1`r...swap`\alpha_4...[inner sep=0pt]-45`x_6@
\zzz0.5,1`rr...swap`\alpha_6...90`x_4@
;
\draw (r)
\zzz-0.5,1`rl...`\alpha_5...90`x_3@
;
\draw[red] (l)
\zzz0,1`uo...`\tblk{\alpha_1}\tblk{\alpha_2}\beta_7...[inner sep=0pt]-45`y_3@
\zzz-1.5,1`ul...`\tblk{\alpha_1}\beta_2...180`y_2@
\zzz0.5,1`ulr...`\tblk{\alpha_1}\beta_1...0`y_1@
;
\draw[red] (uo)
\zzz1.5,1`ur,cyan...swap`\beta_3\tblk{\alpha_2}...0`\tcyan{z_4}@
;
\draw (ulr)
\zzz-0.5,1`ll...`\tblk{\alpha_1}...90`x_1@
;
\draw[cyan] (ur)
\zzz0,1`vo...swap`\gamma_3\tred{\beta_4\beta_5\beta_6\tblk{\alpha_2}}...[inner sep=0pt]-135`z_3@
\zzz-1,1`vl...`\tred{\beta_4}\gamma_1...180`z_1@
;
\draw[cyan] (vo)
\zzz1,1`vr...swap`\gamma_2\tred{\beta_6\tblk{\alpha_2}}...0`z_2@
;
\draw[red] (vl)
\zzz0,1`vla...`\beta_4...90`y_4@
;
\draw[red] (vo)
\zzz0,2`voa...`\beta_5...90`y_5@
;
\draw[red] (vr)
\zzz0,1`vra...swap`\beta_6\tblk{\alpha_2}...135`y_6@
;
\draw (vra)
\zzz0.5,1`vrar...swap`\tblk{\alpha_2}...90`x_2@
;
}.
\]
The two subjacent trees of $(T\circ^f_{v,1}U)\circ^g_{v',2}W$ and
$T\circ^{\tilde{f}}_{v,2}(U\circ^{\tilde{g}}_{v',1}W)$ are the same, and the types of all edges as well, due to the commutativity of the semigroup $\Omega$.
\end{exam}

\begin{theorem}
The operad $\pl$ of pre-Lie family algebras is isomorphic to the operad of typed labeled rooted trees $\tr$, via the isomorphism $\Phi:\pl\ra\tr, \rhd_\omega
\mapsto\treeo{
\eoo`b@
\draw (o)
\eee0,1`l`a@
(o)--(l)\xxx swap`\omega@
;}$, where the edge is of type $\omega,\omega\in\Omega.$
\mlabel{thm:iso}
\end{theorem}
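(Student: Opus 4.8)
The plan is to realize $\Phi$ as a morphism of operads and then deduce bijectivity by comparing the free algebras over $\pl$ and over $\tr$ with the free pre-Lie family algebra produced in Theorem~\mref{thm:free}. The starting observation is that, under the operad structure of Definition~\mref{defn:comp}, the binary element $\Phi(\rhd_\omega)$ acts on the free $\tr$-algebra as the grafting product: for $S,T\in\calt$ a direct unfolding of the global composition gives
\[
\gamma\bigl(\Phi(\rhd_\omega);S,T\bigr)=\sum_{v\in\ver(T)}S\rjt{\omega}{v}T=S\rhd_\omega T,
\]
since plugging $T$ into the root of the corolla runs over all vertices of $T$ and multiplies by $\omega$ the types of the edges lying below the insertion point, which is exactly Definition~\mref{defn:pre}.

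First I would check that $\Phi$ is well defined. As $\ff$ is free, assigning $\rhd_\omega\mapsto\Phi(\rhd_\omega)$ and extending $S_2$-equivariantly determines a unique operad morphism $\widetilde\Phi\colon\ff\to\tr$, so it suffices to prove $\widetilde\Phi(r)=0$ for the generator $r$ of the ideal in Equation~\eqref{relations-operad}. Expanding $\widetilde\Phi(r)$ by means of Definition~\mref{defn:comp} turns it into the three-vertex, multilinear instance of the pre-Lie family identity, which vanishes by the grafting Lemmas~\mref{lemma:one} and~\mref{lemma:two}, exactly as in the proof of Proposition~\mref{prop:comm}. Equivalently, by the displayed identity the free $\tr$-algebra carries the operations $(\rhd_\omega)$, and these satisfy the pre-Lie family relation by Proposition~\mref{prop:comm}; a multilinear relation holding in all free algebras holds in the operad itself. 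Hence $\widetilde\Phi$ annihilates the whole ideal $(R)$ and descends to $\Phi\colon\pl\to\tr$.

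Next I would identify the two free algebras. For a set $X$, the free $\tr$-algebra on $\bfk X$ is $\bigoplus_{n\ge 1}\bigl(\tr(n)\ot(\bfk X)^{\ot n}\bigr)_{S_n}$; a basis of $\tr(n)$ is given by the $n$-vertex typed rooted trees with vertices labelled by $\{1,\dots,n\}$, and passing to $S_n$-coinvariants against $(\bfk X)^{\ot n}$ replaces the labelling by an arbitrary decoration $V(T)\to X$. This identifies the free $\tr$-algebra, as a vector space, with $\bfk\calt(X,\Omega)$, and by the displayed computation its pre-Lie family operations are the grafting maps $(\rhd_\omega)$; so it is the algebra $(\bfk\calt,(\rhd_\omega)_{\omega\in\Omega})$ of Proposition~\mref{prop:comm}, which is free on $X$ by Theorem~\mref{thm:free}. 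The same theorem presents the free $\pl$-algebra on $\bfk X$ as the same object. Now $\Phi$ induces a morphism $\Phi_X$ between these free algebras; since $\Phi$ is the identity in arity $1$, $\Phi_X$ fixes every generator $\bullet_x$, so by the universal property of Theorem~\mref{thm:free} it is the identity of $(\bfk\calt,(\rhd_\omega)_{\omega\in\Omega})$, hence bijective. Restricting $\Phi_X$ to its multilinear component in $n$ distinct generators recovers $\Phi(n)\colon\pl(n)\to\tr(n)$, because $\calp(n)$ is the multilinear part of the free $\calp$-algebra on $n$ generators; therefore every $\Phi(n)$ is an isomorphism, and so is $\Phi$.

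The main obstacle is injectivity. The partial composition in $\tr$ is a sum over reconnection maps (Definition~\mref{defn:comp}), so $\Phi$ is not basis-to-basis and a naive dimension count is awkward, all the more because $\Omega$ may be infinite. Routing injectivity through the freeness of $\bfk\calt$ (Theorem~\mref{thm:free}), rather than through an explicit basis of $\pl(n)$, is what makes the argument work; by contrast surjectivity is easy, since every typed labelled tree is an iterated grafting of corollas, as encoded by Lemma~\mref{lem:bplus}.
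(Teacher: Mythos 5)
Your proposal is correct, and it rests on the same two pillars as the paper's argument: checking that $\Phi$ annihilates the relation $r$ so that it descends from $\ff$ to $\pl$, and then invoking the freeness result of Theorem~\mref{thm:free}. The execution of the bijectivity step, however, is genuinely different. The paper verifies $\Phi(r)=0$ by an explicit partial-composition computation on two-vertex trees, proves surjectivity by a double induction (on the number of vertices of a typed labeled tree and on the fertility of its root, peeling off branches with a corolla), and then disposes of injectivity in a single sentence, asserting that a nontrivial kernel element would produce a relation among typed trees contradicting Theorem~\mref{thm:free}. You instead identify the free $\tr$-algebra on $X$ with $(\bfk\calt,(\rhd_\omega)_{\omega\in\Omega})$ via the coinvariants computation, note that $\Phi$ induces on free algebras a $\pl$-morphism fixing the generators $\bullet_x$, conclude from the universal property of Theorem~\mref{thm:free} that this induced map is the identity, and recover each $\Phi(n)$ as its multilinear component. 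This yields surjectivity and injectivity simultaneously and, in particular, makes rigorous the paper's terse injectivity claim: the statement that $\calp(n)$ is the multilinear part of the free $\calp$-algebra on $n$ generators is precisely what is hiding behind the paper's one-line argument. The price is that you must justify the identification of the free $\tr$-algebra as a module (the passage from labeled to decorated trees through $S_n$-coinvariants), which the paper's explicit surjectivity induction sidesteps; conversely, your route avoids the fertility induction entirely. Both arguments are sound, and your displayed identity $\gamma\bigl(\Phi(\rhd_\omega);S,T\bigr)=S\rhd_\omega T$ is the correct bridge between the operadic composition of Definition~\mref{defn:comp} and the grafting product of Definition~\mref{defn:pre}.
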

\begin{proof}
First, we define an operad morphism $\Phi:\pl\ra\tr.$
Recall that the operad of pre-Lie family algebras is generated by the binary elements $\rhd_\omega,\omega\in\Omega$ with the relations \eqref{relations-operad}. A basis of the vector space $(\ff)_A$ with $|A|=2$, is given by $\{\rhd_\alpha, \alpha\in\Omega\}\sqcup\{\tau\rhd_\alpha, \alpha\in\Omega\}$ where $\tau$ is the nontrivial permutation of two elements. Now set
$$\Phi(\rhd_\alpha)=\treeo{
\eoo`b@
\draw (o)
\eee0,1`l`a@
(o)--(l)\xxx swap`\alpha@
;},\quad \Phi(\tau\rhd_\alpha)=\treeo{
\eoo`a@
\draw (o)
\eee0,1`l`b@
(o)--(l)\xxx swap`\alpha@
;}.$$
Since $\pl=\ff/(R)$, we check that $\Phi(r)=0.$
Hence
\begin{align*}
\Phi(r)&=\Phi\Big(\rhd_\alpha\circ_b\rhd_\beta-\rhd_{\alpha\beta}\circ_a\rhd_\alpha
-\tau_{12}(\rhd_\beta\circ_b\rhd_\alpha-\rhd_{\beta\alpha}\circ_a\rhd_\beta)\Big)\\
&=\treeo{
\eoo`b@
\draw (o)
\eee0,1`l`a@
(o)--(l)\xxx swap`\alpha@
;}\circ_b \treeo{
\eoo`2@
\draw (o)
\eee0,1`l`1@
(o)--(l)\xxx swap`\beta@
;}
-
\treeo{
\eoo`b@
\draw (o)
\eee0,1`l`a@
(o)--(l)\xxx swap`\alpha\beta@
;}\circ_a
\treeo{
\eoo`2@
\draw (o)
\eee0,1`l`1@
(o)--(l)\xxx swap`\alpha@
;}
-\tau_{12}
\left(\treeo{
\eoo`b@
\draw (o)
\eee0,1`l`a@
(o)--(l)\xxx swap`\beta@
;}
\circ_b
\treeo{
\eoo`2@
\draw (o)
\eee0,1`l`1@
(o)--(l)\xxx swap`\alpha@
;}
-
\treeo{
\eoo`b@
\draw (o)
\eee0,1`l`a@
(o)--(l)\xxx swap`\beta\alpha@
;}
\circ_a
\treeo{
\eoo`2@
\draw (o)
\eee0,1`l`1@
(o)--(l)\xxx swap`\beta@
;}\right)\\
&=\left(\treeo{
\eoo`2@
\draw (o)
\eee-1,1`l`a@
(o)--(l)\xxx`\alpha@
;
\draw (o)
\eee1,1`r`1@
(o)--(r)\xxx swap`\beta@
;
}
+
\treeo{
\eoo`2@
\draw (o)
\eee0,1.25`a`1@
(o)--(a)\xxx`\alpha\beta@
;
\draw (a)
\eee0,1.25`aa`a@
(a)--(aa)\xxx`\alpha@
;
}\right)
-
\treeo{
\eoo`b@
\draw (o)
\eee0,1.25`1`2@
(o)--(a)\xxx`\alpha\beta@
;
\draw (a)
\eee0,1.25`aa`1@
(a)--(aa)\xxx`\alpha@
;
}
-
\tau_{12}\left(\treeo{
\eoo`2@
\draw (o)
\eee-1,1`l`1@
(o)--(l)\xxx`\alpha@
;
\draw (o)
\eee1,1`r`a@
(o)--(r)\xxx swap`\beta@
;
}
+\treeo{
\eoo`2@
\draw (o)
\eee0,1.25`a`1@
(o)--(a)\xxx`\beta\alpha@
;
\draw (a)
\eee0,1.25`aa`a@
(a)--(aa)\xxx`\beta@
;
}
-\treeo{
\eoo`b@
\draw (o)
\eee0,1.25`a`2@
(o)--(a)\xxx`\beta\alpha@
;
\draw (a)
\eee0,1.25`aa`1@
(a)--(aa)\xxx`\beta@
;
}\right),\\
&=\treeo{
\eoo`2@
\draw (o)
\eee-1,1`l`a@
(o)--(l)\xxx`\alpha@
;
\draw (o)
\eee1,1`r`1@
(o)--(r)\xxx swap`\beta@
;
}-\treeo{
\eoo`2@
\draw (o)
\eee-1,1`l`a@
(o)--(l)\xxx`\alpha@
;
\draw (o)
\eee1,1`r`1@
(o)--(r)\xxx swap`\beta@
;
}=0.
\end{align*}
So the morphism $\Phi$ is defined on the quotient operad $\pl$.\\

Let us prove that it is bijective. Let $T\in\tm(A)$, we show that it belongs to $\im(\Phi)$ by induction on $|T|$. If $|T|=1$ or $|T|=2$, it is obvious. Let us assume that the result holds for the degree $<|T|$. Let $a$ be the root of typed decorated rooted trees. Up to a permutation, we can write uniquely
$$T=B^+_{a,\omega_1,\ldots,\omega_k}(T_1\cdots T_k)=\treeo{
\eoo`a@
\draw (o)
\eee-1,1`l`T_1@
(o)--(l)\xxx`\omega_1@
;
\draw (o)
\eee1,1`r`T_k@
(o)--(r)\xxx swap`\omega_k@
;
\node at (0,1) {$\dots$};
}$$
where $T_i$ for $1\leq i\leq k,$ is a typed decorated rooted tree of degree strictly less than $|T|$. By the induction hypothesis on $|T|$, $T_i\in\im(\Phi),$ for all $i$, we proceed by induction on $k$. If $k=1$, then
$$T=\treeo{
\eoo`a@
\draw (o)
\eee0,1`l`b@
(o)--(l)\xxx swap`\omega@
;}\circ_b T_1\in\im(\Phi).$$
Let us assume that the result holds for $k-1$, we put
$$T'=B^+_{a,\omega_1,\ldots,\omega_{k-1}}(T_1\cdots T_{k-1})=\treeo{
\eoo`a@
\draw (o)
\eee-1,1`l`T_1@
(o)--(l)\xxx`\omega_1@
;
\draw (o)
\eee1,1`r`T_{k-1}@
(o)--(r)\xxx swap`\omega_{k-1}@
;
\node at (0,1) {$\dots$};
}$$
By the induction hypothesis on $|T|$, $T'\in\im(\Phi).$ Then
$$\treeo{
\eoo`a@
\draw (o)
\eee0,1`l`b@
(o)--(l)\xxx swap`\omega@
;}\circ_a T'=T+T''$$
where $T''$ is a sum of trees with $|T|$ vertices, such that the fertility of the root is $k-1.$ Hence $T''\in \im(\Phi).$ So $T\in\im(\Phi).$\\

Now suppose there is a nontrivial element in the kernel of $\Phi:\pl\to\tr$. That would induce a relation among $\Omega$-typed rooted trees which is not a pre-Lie family relation, which would contradict Theorem~\mref{thm:free}, hence $\Phi$ is an isomorphism.
\ignore{
Let $A$ be a set. Define operad morphism $\phi:\tr\ra\pl$. The morphism $\phi$ implies that the free
$\tr$-algebra generated by $A$, that is to say $\tm(A)$, inherits a pre-Lie family algebra structure defined by
$$x\circ_\omega y:=\gamma\Biggl(\treeo{
\eoo`b@
\draw (o)
\eee0,1`l`a@
(o)--(l)\xxx swap`\omega@
;},x,y\Biggr):=\Biggl(\treeo{
\eoo`b@
\draw (o)
\eee0,1`l`a@
(o)--(l)\xxx swap`\omega@
;}\circ_a x\Biggr)\circ_b y,\,\text{ for}\, x,y\in\tm(A),$$
called the grafting $x$ on $y$. For any trees $T,T'\in\tm(A),$ by definition of the operad composition of $\tr$:
$$T\circ_\omega T'=\sum_{v\in\ver{(T')}}T\rjt{\omega}{v} T',$$
so $\circ_\omega=\rhd_\omega$ for any $\omega$. As $(\tm(A),(\rhd_\omega)_{\omega\in\Omega})$ is the free pre-Lie family algebra generated by $A$ by Theorem~\mref{thm:free}, $\Phi$ is an isomorphism.}
\end{proof}
\ignore{
\begin{coro}
 The operad $\tr$ together with the composition defined by Eq.~(\mref{eq:comp}) defines a pre-Lie family algebra. The pre-Lie family algebra is defined by
 \begin{equation*}
 S\rhd_\omega T:=\Big(\treeo{
\eoo`b@
\draw (o)
\eee0,1`l`a@
(o)--(l)\xxx swap`\omega@
;}\circ_a S\Big)\circ_b T.
 \end{equation*}
\end{coro}

\begin{proof}
This result can be directly obtained Theorem~\mref{thm:iso}.
\end{proof}
}
\section{Zinbiel and Pre-Poisson family algebras}\label{sec:pp}
In this section, we mainly generalize Aguiar's results~\cite{A} , that is, the relationships between pre-Lie family algebras, Zinbiel family algebras and pre-Poisson family algebras. Zinbiel algebras were introduced by Loday~\cite{L1}, see also~\cite{Liv}. We propose here the following definition of a left Zinbiel family algebra.
\begin{defn}
Let $\Omega$ be a commutative semigroup.
A left zinbiel family algebra is a vector space $A$ together with binary operations $\ast_\omega:A\times A\ra A$ for $\omega\in\Omega$ such that
\begin{equation*}
  x\ast_\alpha(y\ast_\beta z)=(x\ast_\alpha y)\ast_{\alpha\beta} z+(y\ast_\beta x)\ast_{\alpha\beta} z,
\end{equation*}
where $x,y,z\in A$ and $\alpha,\beta\in\Omega.$
\end{defn}

\begin{prop}
Let $\Omega$ be a commutative semigroup. Let $(A,(\prec_\omega,\succ_\omega)_{\omega\in\Omega})$ be a commutative dendriform family algebra, i.e. a dendriform family algebra satisfying
$$x\succ_\omega y=y\prec_\omega x.$$
Define the zinbiel family product $$x\ast_\omega y:=x\succ_\omega y=y\prec_\omega x.$$
Then $(A,(\ast_\omega)_{\omega\in\Omega})$ is a zinbiel family algebra.
\mlabel{prop:zin}
\end{prop}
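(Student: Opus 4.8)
The plan is to verify the left Zinbiel family identity
\[
x\ast_\alpha(y\ast_\beta z)=(x\ast_\alpha y)\ast_{\alpha\beta}z+(y\ast_\beta x)\ast_{\alpha\beta}z
\]
by a single direct substitution, invoking only the third dendriform family axiom~(\mref{eq:ddf3}) together with the commutativity hypothesis $x\succ_\omega y=y\prec_\omega x$. I expect that the first two axioms~(\mref{eq:ddf1}) and~(\mref{eq:ddf2}) will not be needed at all.

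First I would record the form of the commutativity condition that the computation actually consumes: renaming $x\leftrightarrow y$ in $x\succ_\omega y=y\prec_\omega x$ gives $x\prec_\omega y=y\succ_\omega x$ for all $x,y\in A$ and $\omega\in\Omega$. Consequently the single product may be read interchangeably as $x\ast_\omega y=x\succ_\omega y$ or as $x\ast_\omega y=y\prec_\omega x$, and I shall pass between these two presentations freely. Then I would expand the left-hand side in the $\succ$-presentation, $x\ast_\alpha(y\ast_\beta z)=x\succ_\alpha(y\succ_\beta z)$, and apply Eq.~(\mref{eq:ddf3}) to obtain $x\succ_\alpha(y\succ_\beta z)=(x\prec_\beta y+x\succ_\alpha y)\succ_{\alpha\beta}z$. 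Distributing the outer operation over the sum and translating each piece back through the dictionary, namely $x\prec_\beta y=y\succ_\beta x=y\ast_\beta x$, $x\succ_\alpha y=x\ast_\alpha y$, and $(-)\succ_{\alpha\beta}z=(-)\ast_{\alpha\beta}z$, yields $(y\ast_\beta x)\ast_{\alpha\beta}z+(x\ast_\alpha y)\ast_{\alpha\beta}z$, which is exactly the right-hand side after commuting the two summands.

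There is essentially no hard step in this argument: once the dictionary $x\prec_\omega y=y\succ_\omega x$ is in place, the identity collapses to a one-line rewriting governed by the single axiom~(\mref{eq:ddf3}). The only point demanding care is the index/side bookkeeping when invoking that axiom, i.e.\ matching the semigroup elements $\alpha,\beta$ to the correct occurrences of $\succ$ and $\prec$ so that the output index $\alpha\beta$ agrees on both sides. Worth emphasizing is that, somewhat in contrast with the earlier results, the verification itself does not call on commutativity of the semigroup $\Omega$ in the index (all indices appear uniformly as $\alpha\beta$); the commutativity assumption enters only insofar as it is what makes the notion of a commutative dendriform family algebra consistent.
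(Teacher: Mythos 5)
Your proposal is correct and follows essentially the same route as the paper: both expand $x\ast_\alpha(y\ast_\beta z)$ as $x\succ_\alpha(y\succ_\beta z)$, apply axiom~(\mref{eq:ddf3}), and use the commutativity hypothesis $x\prec_\omega y=y\succ_\omega x$ to rewrite the result in terms of $\ast$. Your side remark that the semigroup commutativity of $\Omega$ is not consumed in this particular verification is also accurate, given that both summands in the paper's Zinbiel family identity carry the index $\alpha\beta$.
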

\begin{proof}
Since $(A,(\prec_\omega,\succ_\omega)_{\omega\in\Omega})$ is a dendriform family algebra,
we have
$$x\succ_\alpha(y\succ_\beta z)=(x\succ_\alpha y +y\succ_\beta x)\succ_{\alpha\beta}z.$$
Hence
$$x\ast_\alpha(y\ast_\beta z)=(x\ast_\alpha y +y\ast_\beta x)\ast_{\alpha\beta}z,$$
as required.
\end{proof}

\begin{defn}
A Poisson algebra is a triple $(A,\cdot,\{,\})$, where $(A,\cdot)$ is a commutative algebra, $(A,\{,\})$ is a Lie algebra, and the following condition holds:
\begin{equation}
\{x,y\cdot z\}=\{x,y\}\cdot z+y\cdot\{x,z\}.
\mlabel{eq:po}
\end{equation}
\end{defn}

Combining zinbiel family algebra and pre-Lie family algebra,
we propose the following definition.

\begin{defn}
A left pre-Possion family algebra is a triple $(A,(\ast_\omega,\rhd_\omega)_{\omega\in\Omega})$ where $(A,(\ast_\omega)_{\omega\in\Omega})$ is a left zinbiel family algebra and $(A,(\rhd_\omega)_{\omega\in\Omega})$ is a left pre-Lie family algebra. The following conditions hold:
\begin{align}
(x\rhd_\alpha y-y\rhd_\beta x)\ast_{\alpha\beta}z&=x\rhd_\alpha(y\ast_\beta z)-y\ast_\beta(x\rhd_\alpha z),\mlabel{eq:pp1}\\
(x\ast_\alpha y+y\ast_\beta x)\rhd_{\alpha\beta}z&=x\ast_\alpha(y\rhd_\beta z)+y\ast_\beta(x\rhd_\alpha z),\mlabel{eq:pp2}
\end{align}
where $x,y,z\in A$ and $\alpha,\beta\in\Omega.$
\mlabel{defn:prp}
\end{defn}
%

\begin{prop}
Let $\Omega$ be a commutative semigroup.
\begin{enumerate}
\item Let $(A,\{,\, \},(P_\omega)_{\omega\in\Omega})$ be a Rota-Baxter family Lie algebra of weight $0$. Define new operations on $A$ by $$x\rhd_\omega y=\{P_\omega(x),y\}.$$ Then $(A,(\rhd_\omega)_{\omega\in\Omega})$ is a left pre-Lie family algebra.\mlabel{it:pl}
\item Let $(A,\cdot,(P_\omega)_{\omega\in\Omega})$ be a commutative Rota-Baxter family algebra of weight $0$. Define new operations on $A$ by $$x\ast_\omega y=P_\omega(x)\cdot y.$$
     Then $(A,(\ast_\omega)_{\omega\in\Omega})$ is a left zinbiel family  algebra.\mlabel{it:p2}
    \end{enumerate}
    \mlabel{prop:zp}
\end{prop}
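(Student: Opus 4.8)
The plan is to verify, in each case, the defining identity of the target structure by direct substitution, using the weight-zero Rota-Baxter family relation as the single nontrivial input. Throughout I read ``Rota-Baxter family Lie algebra of weight $0$'' as the Lie-algebra analogue of Definition~\mref{def:pp}: a Lie algebra $(A,\{,\})$ equipped with operators $(P_\omega)_{\omega\in\Omega}$ satisfying $\{P_\alpha(x),P_\beta(y)\}=P_{\alpha\beta}\bigl(\{P_\alpha(x),y\}+\{x,P_\beta(y)\}\bigr)$.

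For part~\mref{it:pl}, I would substitute $x\rhd_\omega y=\{P_\omega(x),y\}$ into the left-hand side of the pre-Lie family identity~(\mref{eq:pre}), obtaining
$$\{P_\alpha(x),\{P_\beta(y),z\}\}-\{P_{\alpha\beta}(\{P_\alpha(x),y\}),z\}.$$
The Jacobi identity rewrites the first term as $\{\{P_\alpha(x),P_\beta(y)\},z\}+\{P_\beta(y),\{P_\alpha(x),z\}\}$, and the weight-zero Rota-Baxter relation expands $\{P_\alpha(x),P_\beta(y)\}$ into $P_{\alpha\beta}\bigl(\{P_\alpha(x),y\}+\{x,P_\beta(y)\}\bigr)$, so that the term $\{P_{\alpha\beta}(\{P_\alpha(x),y\}),z\}$ cancels against the one already present. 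What remains is $\{P_{\alpha\beta}(\{x,P_\beta(y)\}),z\}+\{P_\beta(y),\{P_\alpha(x),z\}\}$; using antisymmetry $\{x,P_\beta(y)\}=-\{P_\beta(y),x\}$ together with the commutativity $\alpha\beta=\beta\alpha$ of $\Omega$, this is exactly $y\rhd_\beta(x\rhd_\alpha z)-(y\rhd_\beta x)\rhd_{\beta\alpha}z$, the right-hand side of~(\mref{eq:pre}).

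For part~\mref{it:p2}, I would substitute $x\ast_\omega y=P_\omega(x)\cdot y$ into the zinbiel family identity. Associativity presents the left-hand side as $(P_\alpha(x)\cdot P_\beta(y))\cdot z$, and the weight-zero Rota-Baxter relation turns $P_\alpha(x)\cdot P_\beta(y)$ into $P_{\alpha\beta}(P_\alpha(x)\cdot y+x\cdot P_\beta(y))$. The right-hand side is $P_{\alpha\beta}(P_\alpha(x)\cdot y)\cdot z+P_{\alpha\beta}(P_\beta(y)\cdot x)\cdot z$, and commutativity of the product identifies $P_\beta(y)\cdot x$ with $x\cdot P_\beta(y)$, so that the two sides coincide.

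The substitutions and cancellations are routine; the only real obstacle is the bookkeeping in part~\mref{it:pl}, where one must apply Jacobi and antisymmetry in the correct order and track the semigroup indices so that commutativity of $\Omega$ is invoked precisely where $\alpha\beta$ must be converted into $\beta\alpha$. Part~\mref{it:p2} is a short direct computation by comparison. Since $(A,(\rhd_\omega)_{\omega\in\Omega})$ and $(A,(\ast_\omega)_{\omega\in\Omega})$ are thereby shown to be a left pre-Lie family algebra and a left zinbiel family algebra respectively, the proposition follows.
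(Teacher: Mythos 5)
Your proposal is correct and follows essentially the same route as the paper: both parts rest on substituting the defined operations, expanding $\{P_\alpha(x),P_\beta(y)\}$ (resp.\ $P_\alpha(x)\cdot P_\beta(y)$) via the weight-zero Rota--Baxter family relation, and finishing with antisymmetry/Jacobi and the commutativity of $\Omega$ (resp.\ associativity and commutativity of the product). The only cosmetic difference is that you invoke the Jacobi identity in its Leibniz form $\{a,\{b,c\}\}=\{\{a,b\},c\}+\{b,\{a,c\}\}$ whereas the paper starts from the cyclic form and rearranges; the cancellations and the final bookkeeping of indices are identical.
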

\begin{proof}
\mref{it:pl}. Since $(A,\{,\, \},(P_\omega)_{\omega\in\Omega})$ is a Rota-Baxter family Lie algebra of weight $0$, we have
\begin{align*}
&\{P_\alpha(x),\{P_\beta(y), z\}\}+\{P_\beta(y),\{z,P_\alpha(x)\}\}+\{z,\{P_\alpha(x),P_\beta(y)\}\}\\
={}&\{P_\alpha(x),\{P_\beta(y),z\}\}+\{P_\beta(y),\{z,P_\alpha(x)\}\}
  +\{z,P_{\alpha\beta}\{P_\alpha(x),y\}\}+\{z,P_{\alpha\beta}\{x,P_\beta(y)\}\}=0
\end{align*}
We get
\begin{equation}
\{P_\alpha(x),\{P_\beta(y),z\}\}=-\{P_\beta(y),\{z,P_\alpha(x)\}\}
-\{z,P_{\alpha\beta}\{P_\alpha(x),y\}\}-\{z,P_{\alpha\beta}\{x,P_\beta(y)\}\}.
\mlabel{eq:pp}
\end{equation}
Then
\begin{align*}
&x\rhd_\alpha(y\rhd_\beta z)-(x\rhd_\alpha y)\rhd_{\alpha\beta}z\\
={}&x\rhd_\alpha\{P_\beta(y),z\}-\{P_\alpha(x),y\}\rhd_{\alpha\beta}z\\
={}&\{P_\alpha(x),\{P_\beta(y),z\}\}-\{P_{\alpha\beta}\{P_\alpha(x),y\},z\}\\
={}&{-}\{P_\beta(y),\{z,P_\alpha(x)\}\}-\{z,P_{\alpha\beta}\{P_\alpha(x),y\}\}
-\{z,P_{\alpha\beta}\{x,P_\beta(y)\}\}-\{P_{\alpha\beta}\{P_\alpha(x),y\},z\}\\
&\hspace{3cm}(\text{by Eq.~(\mref{eq:pp})})\\
={}&{-}\{P_\beta(y),\{z,P_\alpha(x)\}\}-\{z,P_{\alpha\beta}\{x,P_\beta(y)\}\}\\
={}&\{P_\beta(y),\{P_\alpha(x),z\}\}-\{P_{\beta\alpha}\{P_\beta(y),x\},z\}\quad(\text{ by $\Omega$ being a commutative semigroup})\\
={}&y\rhd_\beta(x\rhd_\alpha z)-(y\rhd_\beta x)\rhd_{\beta\alpha}z.
\end{align*}
\mref{it:p2}. Since $(A,\cdot,(P_\omega)_{\omega\in\Omega})$ is a commutative Rota-Baxter family algebra, we have
\begin{align*}
x\ast_\alpha(y\ast_\beta z)&=x\ast_\alpha\Big(P_\beta(y)\cdot z\Big)\\
&=P_\alpha(x)\cdot\Big(P_\beta(y)\cdot z\Big)
=\Big(P_\alpha(x)\cdot P_\beta(y)\Big)\cdot z\\
&=P_{\alpha\beta}\Big(P_\alpha(x)\cdot y+x\cdot P_\beta(y)\Big)\cdot z\\
&=P_{\alpha\beta}\Big(P_\alpha(x)\cdot y\Big)\cdot z+P_{\alpha\beta}\Big(P_\beta(y)\cdot x\Big)\cdot z\\
&=(x\ast_\alpha y)\ast_{\alpha\beta}z+(y\ast_\beta x)\ast_{\alpha\beta} z.
\end{align*}
This completes the proof.
\end{proof}

In view of the the above results, one expects that a Rota-Baxter family $(P_\omega)_{\omega\in\Omega}$ on a Poisson algebra will alow us to construct a pre-Poisson family algebra structure on it.
\begin{prop}
Let $\Omega$ be a commutative semigroup. Let $(A,\cdot,\{,\})$ be a Poisson algebra and let $P_\omega:A\ra A$ be a Rota-Baxter family of weight $0$. Define new operations on $A$ by
\begin{equation}
x\ast_\omega y=P_\omega(x)\cdot y\,\text{ and }\, x\rhd_\omega y=\{P_\omega(x), y\},\,\text{ for }\, \omega\in\Omega.
\mlabel{eq:ast}
\end{equation}
Then $(A,(\ast_\omega,\rhd_\omega)_{\omega\in\Omega})$ is a left pre-Poisson family algebra.
\end{prop}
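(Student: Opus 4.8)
The plan is to verify directly the four ingredients required by Definition~\mref{defn:prp}. The standing hypothesis that $(P_\omega)_{\omega\in\Omega}$ is a Rota-Baxter family of weight $0$ on the Poisson algebra is to be read as: it is simultaneously a Rota-Baxter family of weight $0$ for the commutative product $\cdot$ and for the Lie bracket $\{\,,\,\}$, so that for all $x,y\in A$ and $\alpha,\beta\in\Omega$ both identities hold:
\begin{equation*}
P_\alpha(x)\cdot P_\beta(y)=P_{\alpha\beta}\bigl(P_\alpha(x)\cdot y+x\cdot P_\beta(y)\bigr)\quad\text{and}\quad \{P_\alpha(x),P_\beta(y)\}=P_{\alpha\beta}\bigl(\{P_\alpha(x),y\}+\{x,P_\beta(y)\}\bigr).
\end{equation*}
With this reading, the first two ingredients are immediate from Proposition~\mref{prop:zp}: applying part~\mref{it:p2} to the commutative Rota-Baxter family algebra $(A,\cdot,(P_\omega)_{\omega\in\Omega})$ shows that $(A,(\ast_\omega)_{\omega\in\Omega})$ is a Zinbiel family algebra, and applying part~\mref{it:pl} to the Rota-Baxter family Lie algebra $(A,\{\,,\,\},(P_\omega)_{\omega\in\Omega})$ shows that $(A,(\rhd_\omega)_{\omega\in\Omega})$ is a pre-Lie family algebra. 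It then remains only to establish the two compatibility relations~\mref{eq:pp1} and~\mref{eq:pp2}.

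For~\mref{eq:pp1}, I would compute each side and show both equal $\{P_\alpha(x),P_\beta(y)\}\cdot z$. On the left, expanding via~\mref{eq:ast} gives $P_{\alpha\beta}\bigl(\{P_\alpha(x),y\}-\{P_\beta(y),x\}\bigr)\cdot z$; using antisymmetry of the bracket to rewrite $-\{P_\beta(y),x\}=\{x,P_\beta(y)\}$ and then the bracket Rota-Baxter relation, the argument of $P_{\alpha\beta}$ collapses to $\{P_\alpha(x),P_\beta(y)\}$. On the right, $x\rhd_\alpha(y\ast_\beta z)=\{P_\alpha(x),P_\beta(y)\cdot z\}$, and the Poisson Leibniz rule~\mref{eq:po} splits this as $\{P_\alpha(x),P_\beta(y)\}\cdot z+P_\beta(y)\cdot\{P_\alpha(x),z\}$, whose second summand is exactly $y\ast_\beta(x\rhd_\alpha z)$ and thus cancels. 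Hence both sides coincide.

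For~\mref{eq:pp2}, the analogous target value is $\{P_\alpha(x)\cdot P_\beta(y),z\}$. On the left, $(x\ast_\alpha y+y\ast_\beta x)\rhd_{\alpha\beta}z=\{P_{\alpha\beta}(P_\alpha(x)\cdot y+P_\beta(y)\cdot x),z\}$; commutativity of $\cdot$ turns $P_\beta(y)\cdot x$ into $x\cdot P_\beta(y)$, and the product Rota-Baxter relation collapses the inner expression to $P_\alpha(x)\cdot P_\beta(y)$. On the right, the first-slot Leibniz rule (obtained from~\mref{eq:po} by antisymmetry, $\{u\cdot v,z\}=\{u,z\}\cdot v+u\cdot\{v,z\}$) applied to $u=P_\alpha(x)$, $v=P_\beta(y)$ yields $\{P_\alpha(x),z\}\cdot P_\beta(y)+P_\alpha(x)\cdot\{P_\beta(y),z\}$, which after using commutativity of $\cdot$ is precisely $y\ast_\beta(x\rhd_\alpha z)+x\ast_\alpha(y\rhd_\beta z)$.

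I expect no serious obstacle once the two-sided Rota-Baxter reading is pinned down; indeed, pinning it down is the one genuinely substantive point, since a weight-$0$ Rota-Baxter operator for $\cdot$ need not be Rota-Baxter for an independent bracket $\{\,,\,\}$. The remaining care is purely bookkeeping: converting~\mref{eq:po} into a first-slot derivation via antisymmetry of $\{\,,\,\}$ for~\mref{eq:pp2}, and using commutativity of $\cdot$ to match the symmetric summands on the two sides. The commutativity of $\Omega$ enters only through the consistency of the subscripts $\alpha\beta=\beta\alpha$ and the well-definedness of the family structures, and is already part of the hypotheses.
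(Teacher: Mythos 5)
Your proposal is correct and follows essentially the same route as the paper: the Zinbiel and pre-Lie family structures come from Proposition~\mref{prop:zp}, and the two compatibility identities~(\mref{eq:pp1}) and~(\mref{eq:pp2}) are verified by the same computation, with both sides collapsing to $\{P_\alpha(x),P_\beta(y)\}\cdot z$ and $\{P_\alpha(x)\cdot P_\beta(y),z\}$ respectively via the Rota-Baxter family relations and the Leibniz rule~(\mref{eq:po}). Your explicit remark that the family $(P_\omega)_{\omega\in\Omega}$ must be Rota-Baxter for both the product and the bracket is a precision the paper leaves implicit but does in fact use.
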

\mlabel{prop:poi}
\begin{proof}
We first prove Eq.~(\mref{eq:pp1}), then
\begin{align*}
(x\rhd_\alpha y-y\rhd_\beta x)\ast_{\alpha\beta}z&=\Big(\{P_\alpha(x),y\}-\{P_\beta(y),x\}\Big)\ast_{\alpha\beta}z\\
&=\{P_\alpha(x),y\}\ast_{\alpha\beta}z-\{P_\beta(y),x\}\ast_{\alpha\beta}z
\quad(\text{by Eq.~(\mref{eq:ast})})\\
&=P_{\alpha\beta}\{P_\alpha(x),y\}\cdot z-P_{\alpha\beta}\{P_\beta(y),x\}\cdot z\quad(\text{by Eq.~(\mref{eq:ast})})\\
&=P_{\alpha\beta}\{P_\alpha(x),y\}\cdot z+P_{\alpha\beta}\{x,P_\beta(y)\}\cdot z\\
&=\{P_\alpha(x),P_\beta(y)\}\cdot z+P_\beta(y)\cdot\{P_\alpha(x),z\}-P_\beta(y)\cdot\{P_\alpha(x),z\}\\
&=\{P_\alpha(x),P_\beta(y)\cdot z\}-P_\beta(y)\cdot\{P_\alpha(x),z\}\quad(\text{by Eq.~(\mref{eq:po})})\\
&=x\rhd_\alpha(y\ast_\beta z)-y\ast_\beta(x\rhd_\alpha z).
\end{align*}
Second, we prove Eq.~(\mref{eq:pp2}), we have
\begin{align*}
(x\ast_\alpha y+y\ast_\beta x)\rhd_{\alpha\beta} z&=\Big(P_\alpha(x)\cdot y+P_\beta(y)\cdot x\Big)\rhd_{\alpha\beta}z\quad(\text{by Eq.~(\mref{eq:ast})})\\
&=\Big\{P_{\alpha\beta}\Big(P_\alpha(x)\cdot y+P_\beta(y)\cdot x\Big), z\Big\}\quad(\text{by Eq.~(\mref{eq:ast})})\\
&=\{P_\alpha(x)\cdot P_\beta(y), z\}\\
&=-\{z,P_\alpha(x)\cdot P_\beta(y)\}\\
&=-\Big(\{z,P_\alpha(x)\}\cdot P_\beta(y)+P_\alpha(x)\cdot\{z,P_\beta(y)\}\Big)\quad(\text{by Eq.~(\mref{eq:po})})\\
&=P_\alpha(x)\cdot\{P_\beta(y),z\}+P_\beta(y)\cdot\{P_\alpha(x), z\}\\
&=x\ast_\alpha\{P_\beta(y),z\}+y\ast_\beta\{P_\alpha(x),z\}\quad(\text{by Eq.~(\mref{eq:ast})})\\
&=x\ast_\alpha(y\rhd_\beta z)+y\ast_\beta(x\rhd_\alpha z).
\end{align*}
This completes the proof.
\end{proof}
\noindent {\bf Acknowledgments}:
This work was supported by the National Natural Science Foundation
of China (Grant No.\@ 11771191 and 11861051).

\end{document}